\newtheorem{proposition}{Proposition}[section]
\newtheorem{lemma}[proposition]{Lemma}
\newtheorem{definition}[proposition]{Definition}
\newtheorem{Theorem}[proposition]{Theorem}
\newtheorem{corollary}[proposition]{Corollary}
\begin{document}
\begin{CJK*}{GBK}{song}

\centerline{\Large{\textbf{Gap Sequence of Factors of Fibonacci Sequence}}}

\vspace{0.1cm}

\centerline{Huang Yuke\footnote[1]{Department of Mathematical Sciences, Tsinghua University, Beijing, 100084, P. R. China.}$^,$\footnote[2]{E-mail address: hyg03ster@163.com.}
~~Wen Zhiying\footnote[3]{Department of Mathematical Sciences, Tsinghua University, Beijing, 100084, P. R. China.}$^,$\footnote[4]{E-mail address: wenzy@tsinghua.edu.cn(Corresponding author).}}

\vspace{1cm}

\centerline{\textbf{\large{ABSTRACT}}}

\vspace{0.4cm}

\noindent Let $\omega$ be a factor of Fibonacci sequence $F_\infty=x_1x_2\cdots$, then it appears in the sequence infinitely many times. Let $\omega_p$ be the $p$-th appearance of $\omega$ and $\nu_{\omega,p}$ be the gap between $\omega_p$ and $\omega_{p+1}$.
In this paper, we discuss the structure of the gap sequence $\{\nu_{\omega,p}\}_{p\geq1}$, we first introduce the singular kernel word $sk(\omega)$ for any factor $\omega$ of $F_\infty$
and give a decomposition of $\omega$ with respect to $sk(\omega)$. Using the singular kernel and the decomposition, we prove the gap sequence $\{\nu_{\omega,p}\}_{p\geq1}$
has exactly two different elements $\{\nu_{\omega,1},\nu_{\omega,2}\}$ and determine the expressions of gaps completely, then we prove that the gap sequence over the alphabet $\{\nu_{\omega,1},\nu_{\omega,2}\}$ is still a Fibonacci sequence. Finally, we introduce the spectrum for studying some typical combinatorial, using the results above, we determine completely the spectrums.

\vspace{0.4cm}

\setcounter{section}{1}

\noindent\textbf{\large{1.~Introduction}}

\vspace{0.4cm}

Let $\mathcal{A}=\{a,b\}$ be a binary alphabet. Let $\mathcal{A}^\ast$ be the set of finite words on $\mathcal{A}$ and $\mathcal{A}^{\mathbb{N}}$ be the set of one-sided infinite words. The elements of $\mathcal{A}^\ast$ are called words or factors, which will be denoted by $\omega$. The neutral element of $\mathcal{A}^\ast$ is called the empty word, which we denote by $\varepsilon$. For a finite word $\omega=x_1x_2\cdots x_n$, the length of $\omega$ is equal to $n$ and denoted by $|\omega|$. We denote by $|\omega|_a$ (resp. $|\omega|_b$) the number of letters of $a$ (resp. $b$) appearing in $\omega$.

Factor property have been studied extensively, such as Lothaire\cite{L1983,L2002}. As a typical sequence over a binary alphabet, the Fibonacci sequence, having many remarkable properties, appears in many aspects of mathematics and computer science, symbolic dynamics, theoretical computer science etc., we refer to Allouche and Shallit\cite{AS2003}. Specifically,
in combinatorial on words, see Berstel\cite{B1966,B1980} for a survey.

As usual, let $\sigma:\mathcal{A}\rightarrow \mathcal{A}^\ast$ be a morphism defined by $\sigma(a)=ab$, $\sigma(b)=a$. As we know, $\mathcal{A}^\ast$ is the free monoid on $\mathcal{A}$, so $\sigma(ab)=\sigma(a)\sigma(b)$. We define the $k$-th iteration of $\sigma$ by $\sigma^k(a)=\sigma^{k-1}(\sigma(a))$, $k\geq2$ and we denote $F_k=\sigma^k(a)$, by convention, we define $\sigma^0(a)=a$ and $\sigma^0(b)=b$.
Then the Fibonacci sequence $F_\infty$ is defined by
$$F_\infty=\lim_{k\rightarrow\infty}F_k=abaababaabaababaababa\cdots,$$
for the details of the properties of the sequence, see\cite{WW1994}.

Let $f_k$ be the $k$-th Fibonacci number, i.e., $f_{-1}=1$, $f_0=1$, $f_1=2$, $f_{k+1}=f_k+f_{k-1}$ for $k\geq 0$. Obviously, $|F_k|=f_k$.

For a finite word $\omega=x_1x_2\cdots x_n$, let $0\leq i\leq n-1$, the word $C_i(\omega):=x_{i+1}\cdots x_nx_1\cdots x_i$ is called the $i$-th conjugation of $\omega$.
If $j\geq n$, $0\leq i\leq n-1$ and $i\equiv j(mod\ n)$, we define $C_j(\omega):=C_i(\omega)$.

Let $\tau=x_1x_2\cdots x_m\cdots$ be a sequence, for any $i\leq j$, define $\tau[i,j]:=x_ix_{i+1}\cdots x_{j-1}x_j$, the factor of $\tau$ of length $j-i+1$, starting from the $i$-th letter and ending to the $j$-th letter. By convention, we note $\tau[i]:=\tau[i,i]=x_i$ and $\tau[i,i-1]:=\varepsilon$.

The notation $\nu\prec\omega$ means that the word $\nu$ is a factor of word $\omega$.

For each fixed $k\geq1$, we always denote by $\alpha\in\{a,b\}$ the last letter of $F_k$. It's easy to see that when $k$ is even then $\alpha=a$ and when $k$ is odd then $\alpha=b$.
Since $F_{k+1}=F_kF_{k-1}$, the last letter of $F_k$ and $F_{k+2j}$ are coincident.
Since $F_0=a$ and $F_1=ab$, then last letter of $F_k$ and $F_{k+2j-1}$ are different,
which we denote by $\beta\in\{a,b\}$. That means $\alpha$ is the last letter of $F_{k+2j}$ and
$\beta$ is the last letter of $F_{k+2j-1}$, for each $j=1,2,\ldots$.
Throughout the paper, we always suppose $\alpha$ and $\beta$ are different letter when
they appear simultaneously.

Let $\nu=\nu_1\nu_2\cdots\nu_n\in\mathcal{A}^\ast$, we denote by $\nu^{-1}:=\nu_n^{-1}\cdots\nu_2^{-1}\nu_1^{-1}$, called the inverse word of $\nu$.
Let $\omega=u\nu$, then $\omega^{-1}=(u\nu)^{-1}=\nu^{-1}u^{-1}$.
Furthermore, $\omega\nu^{-1}=u\nu\nu^{-1}=u$ and $u^{-1}\omega=u^{-1}u\nu=\nu$.

In 1994, Wen Zhi-Xiong and Wen Zhi-Ying\cite{WW1994} introduced two concepts called singular word and singular decomposition. The singular word of order $k$, which we denote by $s_k=\beta F_k\alpha^{-1}$, where $\omega
\alpha^{-1}=\omega[1,|\omega|-1]$, i.e., delete the last letter of $\omega$.
Obviously, we must make sure the last letter of $\omega$ is $\alpha$ before we use notation $\omega\alpha^{-1}$ to express a factor.

The singular word will play an important role in this paper, for more details of singular words and applications, see Cao and Wen\cite{CW2003},
Tan and Wen\cite{TW2007}, which generalized the singular word to Sturmian sequences and Tribonacci sequence respectively.
The singular word has some applications in some aspects, such as Lyndon words\cite{M1999, S2014}, palindromes\cite{G2006}, smooth words\cite{BBC2005}, location of factors\cite{CH2005, CH2010}, Pad$\acute{e}$ approximation\cite{KTW1999, T1999}, etc.

\vspace{0.4cm}

\noindent\emph{1.1~Some Definitions}

\vspace{0.4cm}

Let $\omega$ be factor of $F_\infty$, we will introduce some definitions: factor sequence $\{\omega_p\}_{p\ge 1}$, gap word $\nu_{\omega,p}$, gap sequence $\{\nu_{\omega,p}\}_{p\ge 1}$, and singular kernel $sk(\omega)$.
Especially, when $\omega$ is $s_k$, the singular word of order $k$, the factor sequence $\{s_{k,p}\}_{p\ge 1}$, the gap word $\nu_{s_k,p}$ and gap sequence $\{\nu_{s_k,p}\}_{p\ge 1}$.

\begin{definition}[Factor sequence]
Let $\omega$ be a factor of Fibonacci sequence, then it appears in the sequence infinitely many times, which we arrange by the sequence
$\{\omega_p\}_{p\ge 1}$, where $\omega_p$ denote the $p$-th appearance of $\omega$.
Especially, when $\omega=s_k$, the factor sequence is $\{s_{k,p}\}_{p\ge 1}$,
where $s_{k,p}$ is the $p$-th appearance of $s_k$.

\end{definition}

\noindent\textbf{Remark.}
In this paper, we always use letter "$k$" to express the "order", such as $F_k=\sigma^k(a)$, $f_k=|F_k|$, $s_k=\beta F_k\alpha^{-1}$ etc, use letter "$p$" to express "the $p$-th appearance" of a word in Fibonacci sequence, such as $\omega_p$, $s_{k,p}$ etc, use letter "$n$" to express "the length" of a factor, such as $|\omega|=n$.

\begin{definition}[Gap]
Let $\omega_p=x_{i+1}\cdots x_{i+n}$, $\omega_{p+1}=x_{j+1}\cdots x_{j+n}$, the gap
between $\omega_p$ and $\omega_{p+1}$, denoted by $\nu_{\omega,p}$, is defined by
\begin{equation*}
\nu_{\omega,p}=
\begin{cases}
\varepsilon&when~i+n=j,~\omega_p~and~\omega_{p+1}~are~adjacent;\\
x_{i+n+1}\cdots x_{j}&when~i+n<j,~\omega_p~and~\omega_{p+1}~are~separated;\\
(x_{j+1}\cdots x_{i+n})^{-1}&when~i+n>j,~\omega_p~and~\omega_{p+1}~are~overlapped.
\end{cases}
\end{equation*}
Especially, when $\omega=s_k$, the gap
between $s_{k,p}$ and $s_{k,p+1}$ denoted by $\nu_{s_k,p}$.

The set of gaps of factor $\omega$ is defined as $\{\nu_{\omega,p}|~p\geq1\}$.
\end{definition}

\noindent\textbf{Remark.}
Intuitively when $\omega_p$ and $\omega_{p+1}$ are overlapped,
the overlapped part is the word $x_{j+1}\cdots x_{i+n}$, we take its inverse word as the gap $\nu_{\omega,p}$.
By this way, it is clear to distinguish the cases "adjacent", "separated" and "overlapped".

\vspace{0.2cm}

\noindent\textbf{Example.}
Let $\omega_1=aababaab$, $\omega_2=baabaabab$, then:

(1) $\nu_{\omega_1,1}=\varepsilon$ (adjacent) and $\nu_{\omega_1,2}=(aab)^{-1}$ (overlapped);

(2) $\nu_{\omega_2,1}=aaba$ (separated) and $\nu_{\omega_2,2}=b^{-1}$ (overlapped).

\vspace{0.2cm}

\noindent\textbf{Remark.}
A closed related concept of "Gap" is "Return Word" was introduced by F.Durand\cite{D1998}, for characterizing a sequence over a finite alphabet to be substitutive, he proved that a sequence is primitive substitutive if and only if the set of its return words is finite, which means, each factor of this sequence has finite return words.
In 2001, L.Vuillon\cite{V2001} proved that an infinite word $\tau$ is a Sturmian sequence if and only if each non-empty factor $\omega\prec\tau$ has exactly two distinct return words. Some other related researches(see also \cite{AB2005,BPS2008}) were interested in the cardinality of the set of return words of $\omega$ and the consequent results, but didn't concern about the structures of the sequence derived by return words. Essentially gap words can be derived from the return words which differ from only one prefix $\omega$, but since the terminology "gap" will be convenient and have some advantages for our discussions, we prefer adopt it.

In the present paper, we are interested in the structures of the gap words and the gap sequence, i.e., we will determine the gap words for each factor $\omega$ (in general, the gaps associated with $\omega$ are distinct for different $\omega$), and the structure of the sequence induced by the gap words, we will prove that the sequence over a new alphabet is still a Fibonacci sequence. Moreover, using the structure, we discuss some typical combinatoric properties of the factors.

\vspace{0.2cm}

\begin{definition}[Gap sequence]
Let $\nu_{\omega,p}$ be the gap between $\omega_p$ and $\omega_{p+1}$, we call $\{\nu_{\omega,p}\}_{p\ge 1}$ the gap sequence of the factor $\omega$.
\end{definition}

We will see below that the sequence  $\nu_{\omega,p}$ consists of only two different factors
which we will determine them explicitly, moreover the sequence is still a Fibonacci sequence over a binary alphabet.

\vspace{0.2cm}

Now we are going to introduce "singular kernel" (Definition 1.4) and give "singular decomposition" (Proposition 1.5) which will play an important role in our studies.

\begin{definition}[Singular kernel]
For each $\omega\prec F_\infty$, we denote the longest singular word $s_k$ in $\omega$ by $sk(\omega)$, called the singular kernel of $\omega$.
\end{definition}

We also call "singular kernel" as "kernel" sometimes for short.

\vspace{0.2cm}

\noindent\textbf{Example.}
$sk(baabaa)=aabaa=s_3$, $sk(aababa)=bab=s_2$.

\begin{proposition}[Uniqueness of Singular Kernel and Singular Decomposition]\

Assume that $\omega\prec F_\infty$ and $\omega\not\in\{\varepsilon,ab,ba,aba\}$. Then

(1) $\omega$ has a unique singular kernel $sk(\omega)$, i.e., as a factor, $sk(\omega)$ appears in $\omega$ only once;

(2) $\omega$ has a unique singular decomposition by its singular as $\omega=\mu_1(\omega)\ast sk(\omega)\ast\mu_2(\omega)$.
\end{proposition}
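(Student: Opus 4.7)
My plan is to prove (1) in three steps---existence of a singular word in $\omega$, uniqueness by length of the longest such word, and uniqueness of its position in $\omega$---after which (2) follows immediately.

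The first two steps are essentially elementary. Every nonempty $\omega$ contains at least one of the length-one singular words $a$ and $b$ (which fit the formula $s_k=\beta F_k\alpha^{-1}$ at the smallest values of $k$), so the set of singular words occurring in $\omega$ is nonempty, bounded in length by $|\omega|$, and thus admits a longest element. Since $|s_k|=f_k$ is strictly increasing for $k\geq 0$ while $|s_{-1}|=|s_0|=1$, two distinct singular words share a length only when both have length one. The longest singular word in $\omega$ therefore fails to be unique only when $\omega$ contains both $a$ and $b$ yet no $s_k$ with $k\geq 1$; but avoiding $s_1=aa$, $s_2=bab$, and the automatically absent $bb$ forces $\omega$ to alternate and have length at most three, leaving exactly $\omega\in\{ab,ba,aba\}$, all excluded by hypothesis. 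The residual length-one cases $\omega\in\{a,b\}$ are trivial.

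The heart of the argument is the third step. Let $sk(\omega)=s_m$ and suppose for contradiction that $s_m$ occurs at two positions $i_1<i_2$ in $\omega$. Then the factor $\omega[i_1,\,i_2+f_m-1]\prec F_\infty$ carries two occurrences of $s_m$, and the contradiction will follow from the key lemma
\begin{equation*}
\text{every factor of } F_\infty \text{ containing two occurrences of } s_m \text{ also contains some } s_{m'} \text{ with } m'>m.
\end{equation*}
Granted this, $\omega$ would contain a singular word strictly longer than $s_m$, contradicting the maximality in the definition of $sk(\omega)$. To prove the lemma I would induct on $m$, exploiting the self-similarity $F_\infty=\sigma(F_\infty)$: up to a one-letter boundary adjustment, $\sigma$ relates occurrences of $s_{m-1}$ to occurrences of $s_m$, so two occurrences of $s_m$ pull back under $\sigma^{-1}$ (Fibonacci desubstitution) to two occurrences of $s_{m-1}$; the inductive hypothesis then supplies a longer singular word in between, and pushing this forward through $\sigma$ yields the desired $s_{m'}$. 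The base case $m=1$ (two copies of $aa$) is handled by direct inspection of the short factors of $F_\infty$.

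Once (1) is established, (2) is immediate: the unique occurrence of $sk(\omega)$ in $\omega$ determines $\mu_1(\omega)$ and $\mu_2(\omega)$ without ambiguity. The main obstacle in this plan is the key lemma above---controlling how two occurrences of $s_m$ in $F_\infty$ force a longer singular word between them---which is where the self-similar combinatorics of the Fibonacci substitution genuinely enter; all other steps reduce to length-based bookkeeping and the finite verification of a handful of small cases.
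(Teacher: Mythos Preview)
Your outline is correct, and parts (1)--(2) for existence and length-uniqueness match the paper almost verbatim. The divergence is in your third step. The paper does not set up an induction via desubstitution; instead it invokes Proposition~1.7(2) (the positively separate property of Wen--Wen~1994) directly: consecutive occurrences of $s_k$ in $F_\infty$ are separated by exactly $s_{k+1}$ or $s_{k-1}$, so two occurrences of $s_k$ inside $\omega$ force $\omega$ to contain either $s_k s_{k+1} s_k$ (hence $s_{k+1}$) or $s_k s_{k-1} s_k = s_{k+2}$ (by the identity of Lemma~3.1(2)). In either case a strictly longer singular word sits in $\omega$, contradicting maximality. This is a two-line argument once Proposition~1.7(2) is on the table.

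Your route---pulling two occurrences of $s_m$ back through $\sigma^{-1}$ to two occurrences of $s_{m-1}$, applying induction, and pushing forward---would also work, and has the virtue of being self-contained rather than importing the full gap structure of the singular words. The cost is that the correspondence $\sigma(s_{m-1})\leftrightarrow s_m$ is only ``up to a one-letter boundary adjustment'' as you note (concretely, $\sigma(s_{m-1})$ equals $s_m$ with one letter added or removed at an end, depending on parity), and tracking this through both the desubstitution of the ambient factor of $F_\infty$ and the push-forward of the longer singular word requires some care. None of this is hard, but it is genuinely more work than the paper's one-line appeal to an already-established structural result.
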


\begin{proof}
Notice that $s_{-1}=b$ and $s_{0}=a$, so if $\omega\prec F_\infty$ with $\omega\neq\varepsilon$, $\omega$ contains a singular word.
When $\omega\not\in\{\varepsilon,ab,ba,aba\}$, there is a singular word $s_i\prec\omega$ with
$i\geq1$. Since $|s_k|=f_k$, we see that if $i<j$, $i\neq-1$, $j\neq0$, $|s_i|<|s_j|$.
Thus, there is a singular word $s_k\prec\omega$, such that if $s_m\prec\omega$ is a singular words and $m\neq k$, then $|s_m|<|s_k|$.
By definition $s_k$ is a singular kernel of $\omega$.
Suppose $s_k$ appears in $\omega$ twice, then by Proposition 1.7(2), either $s_ks_{k+1}s_k$ or $s_ks_{k-1}s_{k}=s_{k+2}$ will be a factor
of $\omega$. In both cases, since $|s_{k+1}|,|s_{k+2}|>|s_k|$, which contracts the hypotheses of $s_k$.
So $s_k$ appears in $\omega$ only once, that is $\omega$ has a unique singular kernel $s_k$.

From the discussion above, we get immediately the conclusion (2).
\end{proof}

\vspace{0.2cm}

\noindent\textbf{Remark.} In this article, we only consider the factor which has a unique singular decomposition by its singular kernel, i.e., $\omega\not\in\{\varepsilon,ab,ba,aba\}$. When $\omega\in\{\varepsilon,ab,ba,aba\}$, their gaps and gap sequences are ready to determine.

\vspace{0.2cm}

The decomposition of Proposition 1.5 will be essential in our research, we will determine the decomposition for any factor, it is not evident to
get the expressions of $sk(\omega)$, $\mu_1(\omega)$ and $\mu_2(\omega)$ in general. To do this, let $\omega$ be a factor with $|\omega|=n$,
since we only know the length of $\omega$, we should first determine all possible order of its singular kernel,
secondly we should study all possible factors neighbor to the kernel. By a carefully analysis, we can classify all factors of $F_\infty$ into
the following six set, called types. In each types, we have an explicit expression of the decomposition.

\begin{definition}[Types]
The sets \emph{Ti,j}, where $i=1,2$, $j=1,2,3$ are defined as follows:

\emph{T1.1}:=$\{\omega\in F_\infty|~|\omega|=f_k, sk(\omega)=s_k\}$;

\emph{T1.2}:=$\{\omega\in F_\infty|~|\omega|=f_k, sk(\omega)=s_{k-1}\}$;

\emph{T1.3}:=$\{\omega\in F_\infty|~|\omega|=f_k, sk(\omega)=s_{k-2}\}$;

\emph{T2.1}:=$\{\omega\in F_\infty|~f_k<|\omega|<f_{k+1}, sk(\omega)=s_k\}$;

\emph{T2.2}:=$\{\omega\in F_\infty|~f_k<|\omega|<f_{k+1}, sk(\omega)=s_{k-1}\}$;

\emph{T2.3}:=$\{\omega\in F_\infty|~f_k<|\omega|<f_{k+1}, sk(\omega)=s_{k-2}\}$.
\end{definition}

In Lemma 4.1 we will prove these six types are pairwise disjoint and their union is all factors of $F_\infty$ with length $f_k\leq|\omega|<f_{k+1}$.

\vspace{0.4cm}

\noindent\emph{1.2 Decompositions of $F_\infty$}

\vspace{0.4cm}

Wen and Wen\cite{WW1994} gave two decompositions of $F_\infty$ as below, and the second one is called the positively separate property of the singular words.

\begin{proposition}[Wen and Wen\cite{WW1994}]\

\vspace{-0.5cm}
\begin{equation*}
\begin{split}(1)~ F_\infty=\prod^\infty_{k=-1}s_k=\underbrace{a}_{s_{-1}}\underbrace{b}_{s_0}\underbrace{aa}_{s_1}\underbrace{bab}_{s_2}\underbrace{aabaa}_{s_3}
\underbrace{babaabab}_{s_4}\underbrace{aabaababaabaa}_{s_5},
\end{split}
\end{equation*}
where $s_k=\beta F_k\alpha^{-1}$, the $k\text{-}th$ singular word;

\vspace{-0.5cm}
\begin{equation*}
\begin{split}(2)~ F_\infty=\left(\prod^{k-1}_{j=-1}s_j\right)s_{k,1}\nu_{s_k,1}s_{k,2}\nu_{s_k,2}\cdots s_{k,p}\nu_{s_k,p}\cdots,~For~any~k\geq0,
\end{split}
\end{equation*}
where the set of gaps $\{\nu_{s_k,p},~p\geq1\}$ has only two elements $s_{k+1}$ and $s_{k-1}$. Furthermore, the gap sequence $\{\nu_{s_k,p}\}_{p\geq1}$ is Fibonacci sequence over the alphabet $\{s_{k+1}, s_{k-1}\}$.

\end{proposition}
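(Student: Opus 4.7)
For Part (1), the plan is to reduce everything to the formula $s_k=\beta F_k\alpha^{-1}$ and the parity observation that $F_k$ and $F_{k+1}$ have distinct last letters. Specifically, I would prove by induction on $n\ge -1$ the sharper identity $\prod_{j=-1}^{n}s_j=F_{n+2}\alpha_{n+2}^{-1}$, where $\alpha_{n+2}$ denotes the last letter of $F_{n+2}$. The base case $n=-1$ is immediate. For the inductive step, the product becomes $F_{n+2}\alpha_{n+2}^{-1}\cdot\beta_{n+1}F_{n+1}\alpha_{n+1}^{-1}$; since $\alpha_{n+2}=\beta_{n+1}$ by parity, the boundary $\alpha_{n+2}^{-1}\beta_{n+1}$ cancels and we recover $F_{n+2}F_{n+1}\alpha_{n+1}^{-1}=F_{n+3}\alpha_{n+1}^{-1}=F_{n+3}\alpha_{n+3}^{-1}$ (using $\alpha_{n+3}=\alpha_{n+1}$, same parity). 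Passing to the limit yields $F_\infty=\prod_{j=-1}^{\infty}s_j$.

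For Part (2), the pivotal lemma is $s_{k+2}=s_k\,s_{k-1}\,s_k$ for $k\ge 0$, which I would verify directly: $F_{k+2}=F_{k+1}F_k=F_kF_{k-1}F_k$ together with $\alpha_{k+2}=\alpha_k$ and $\beta_{k+2}=\beta_k$ gives $s_{k+2}=\beta_k F_kF_{k-1}F_k\alpha_k^{-1}$, and inserting $\alpha_k^{-1}\alpha_k$ and $\beta_k\beta_k^{-1}$ at the two internal seams exhibits two copies of $s_k$ surrounding $\alpha_kF_{k-1}\beta_k^{-1}=s_{k-1}$. Combining with Part (1), $F_\infty=(\prod_{j=-1}^{k-1}s_j)\cdot s_ks_{k+1}s_{k+2}s_{k+3}\cdots$, and a direct factor check shows $s_k$ does not appear inside the prefix $\prod_{j=-1}^{k-1}s_j=F_{k+1}\alpha^{-1}$, so $s_{k,1}=s_k$. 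Iteratively applying the identity $s_{k+j}=s_{k+j-2}s_{k+j-3}s_{k+j-2}$ for $j\ge 2$ to each block $s_{k+j}$ in the tail, the whole tail reparses as $s_k\,\nu_1\,s_k\,\nu_2\,s_k\cdots$ with every $\nu_p\in\{s_{k-1},s_{k+1}\}$.

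To identify this gap sequence with the Fibonacci sequence over $\{A,B\}:=\{s_{k+1},s_{k-1}\}$, let $\gamma_j$ denote the sub-sequence of gaps contributed by the block $s_{k+j}$. One checks $\gamma_0=\varepsilon$, $\gamma_1=A$, $\gamma_2=B$, and the expansion lemma forces $\gamma_j=\gamma_{j-2}\gamma_{j-3}\gamma_{j-2}$ for $j\ge 3$. By induction on $j$, this is exactly the recursion defining the $(j-2)$-th singular word of the Fibonacci sequence in the alphabet $\{A,B\}$, so $\gamma_j$ is precisely that singular word. Therefore $\Gamma_n:=\gamma_1\cdots\gamma_n$ is the analogue of $\prod_{m=-1}^{n-2}s_m$ one level up, which by Part (1) applied in the alphabet $\{A,B\}$ equals the prefix of length $f_n-1$ of the Fibonacci sequence over $\{A,B\}$; letting $n\to\infty$ gives the asserted structure. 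The main obstacle is the alternation bookkeeping in the iterative expansion: one must carefully track whether each $s_{k+j}$ starts and ends with $s_k$ (even offset) or with a gap (odd offset) so that the concatenations splice cleanly and the recursion $\gamma_j=\gamma_{j-2}\gamma_{j-3}\gamma_{j-2}$ is legitimate; once that is pinned down, everything else is a formal consequence of Part (1) lifted one level.
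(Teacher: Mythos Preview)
The paper does not give its own proof of this proposition; it is quoted from Wen and Wen~\cite{WW1994}. The paper does, however, prove two pieces you use: Lemma~3.2 is exactly your induction for Part~(1) (phrased as $\prod_{j=-1}^{k-1}s_j=\alpha^{-1}s_{k+1}$), and Lemma~3.1(2) is the identity $s_{k}=s_{k-2}s_{k-3}s_{k-2}$. So your treatment of Part~(1) and your pivotal lemma for Part~(2) agree with the paper.

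For Part~(2), your argument has a genuine gap. You correctly produce a \emph{factorization} of the tail of $F_\infty$ as $s_k\,\nu_1\,s_k\,\nu_2\,s_k\cdots$ with each $\nu_i\in\{s_{k-1},s_{k+1}\}$, and your bootstrapping of $\gamma_j=\gamma_{j-2}\gamma_{j-3}\gamma_{j-2}$ to identify $(\nu_i)_{i\ge1}$ with the Fibonacci word over $\{s_{k+1},s_{k-1}\}$ is a nice and correct idea. But the statement is about the \emph{occurrences} $s_{k,p}$ of $s_k$ in $F_\infty$ and the gaps $\nu_{s_k,p}$ between them. To finish, you must show that the $s_k$ blocks in your factorization are exactly the occurrences of $s_k$ in $F_\infty$: concretely, that $s_k$ is not a factor of $s_{k+1}$, and that no occurrence of $s_k$ straddles a block boundary (e.g.\ inside $s_ks_{k+1}$ other than the leftmost position, inside $s_{k+1}s_k$ other than the rightmost, inside $s_ks_{k-1}s_k$ other than the two obvious copies, and similarly across the junction with the prefix). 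This is precisely the ``positively separate property'' of singular words that is the nontrivial core of the Wen--Wen result; it is not a direct factor check, and your sketch only addresses non-occurrence in the prefix $\prod_{j=-1}^{k-1}s_j$. Without this step, your factorization could in principle miss occurrences of $s_k$, and then neither the identification of $\nu_i$ with $\nu_{s_k,i}$ nor the claim about the gap sequence follows.
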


\noindent\textbf{Example.}
In the decomposition 1.7(2), take $s_2=bab$. Then two distinct gaps of the word $s_2$ are $\nu_{s_2,1}=s_3=aabaa$ and $\nu_{s_2,2}=s_1=aa$.

\vspace{-0.3cm}
\begin{equation*}
\begin{split}
F_\infty=&a|b|aa(bab)\underbrace{aabaa}_{A}(bab)\underbrace{aa}_{B}(bab)
\underbrace{aabaa}_{A}(bab)\underbrace{aabaa}_{A}(bab)\underbrace{aa}_{B}(bab)\underbrace{aabaa}_{A}(bab)\\
&\underbrace{aa}_{B}(bab)\underbrace{aabaa}_{A}(bab)
\underbrace{aabaa}_{A}(bab)\underbrace{aa}_{B}(bab)
\underbrace{aabaa}_{A}(bab)\underbrace{aabaa}_{A}(bab)\underbrace{aa}_{B}\cdots.
\end{split}
\end{equation*}
We see that the gap sequence $\{\nu_{s_2,p}\}_{p\geq1}=ABAABABAABAAB\cdots$ is Fibonacci sequence over the alphabet $\{A,B\}$.

\vspace{0.2cm}

The first aim of this article is to extended the results for singular words to the general words of the Fibonacci sequence, and discuss the structure of gap sequence $\{\nu_{\omega,p}\}_{p\geq1}$. But in the present case, we have no recurrence relation $s_{k+2}=s_ks_{k-1}s_k$ only valid for singular words. One of main ideas for overcoming the difficulty is to introduce the singular kernel and establishes the relation among four different sequences: $\{\omega_p\}$, $\{s_{k,p}\}$, $\{\nu_{\omega,p}\}$ and $\{\nu_{s_k,p}\}$. We also give the expressions of gaps between each $\omega_p$ and $\omega_{p+1}$.
Since different factors has different gaps, we have no general expressions for them, by carefully observing, we are led to classify some types of the factors by their
characterization, and study respectively the gaps by these types(see Definition 1.6).

The second aim of this article consists of studying some global combinatoric properties of factors, that is, the property depends
on the location of the factor. We will introduce the spectrum $\{(\omega,p)\}$ of a property, that is, $\omega\prec F_\infty$, $p\in\mathbb{N}$, s.t. $\omega_p$ possess the property.
The spectrum $\{(\omega,p)\}$ describes two independent variables $\omega$ and $p$.
Using the results above,  we determine completely the spectrum for some typical combinatoric properties.

\vspace{0.4cm}

\noindent\emph{1.3~Organization of the paper}

\vspace{0.4cm}

The paper is organized as follows.
In Section 2, we state the main results of the paper and give some examples. Section 3 to Section 5 are devoted to the proofs of our main results.
In Section 6, we will define and determine the spectrums of some combinatorial properties of factors.

\vspace{0.5cm}

\stepcounter{section}

\noindent\textbf{\large{2.~Main Results And Examples}}

\vspace{0.4cm}

In this section, we state the main results of this paper and give some simple examples.
The main conclusions are Theorem 2.2 and 2.4 which characterize the structure of the gap sequence: Theorem 2.2 show that there are exactly two distinct elements
and they constitutes still a Fibonacci sequence over a new alphabet; and Theorem 2.4, for any factor $\omega\in F_{\infty}$, gives explicitly the expressions for all gaps.

\vspace{0.2cm}

\noindent\textbf{Proposition 1.5} (Uniqueness of Singular Kernel and Singular Decomposition).

\emph{Assume that $\omega\prec F_\infty$ and $\omega\not\in\{\varepsilon,ab,ba,aba\}$. Then}

\emph{(1) $\omega$ has a unique singular kernel $sk(\omega)$, i.e., as a factor, $sk(\omega)$ appears in $\omega$ only once;}

\emph{(2) $\omega$ has a unique singular decomposition by its singular as $\omega=\mu_1(\omega)\ast sk(\omega)\ast\mu_2(\omega)$.}

\vspace{0.2cm}

Proposition 1.5 states a factor $\omega$ can be decomposed by its singular kernel, and two questions arise naturally: (1) for any factor $\omega$, determine explicitly its decomposition by singular kernel; (2) the sequences $\{\omega_p\}_{p\geq0}$ and $\{s_{k,p}\}_{p\geq0}$ describes the locations of these factors, what is the relation between the singular kernel of $sk(\omega_p)$ and the singular word $s_{k,p}$? Theorem 2.3 answers completely the first question, and Theorem 2.1 answers the second question positively, it shows that $sk(\omega_p)=s_{k,p}$.

\begin{Theorem}[Decomposition of $\omega_p$ and $\nu_{\omega,p}$ by $sk(\omega)$]\

Let $\omega_p\prec F_\infty$ and  $sk(\omega_p)=s_k$. Both decompositions below are unique:

(1) $\omega_p=\mu_1(\omega)\ast s_{k,p}\ast\mu_2(\omega)$;

(2) $\nu_{\omega,p}=\mu^{-1}_2(\omega)\ast\nu_{s_k,p}\ast\mu^{-1}_1(\omega)$.
\end{Theorem}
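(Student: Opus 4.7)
The plan for Part (1) is to establish an order-preserving bijection between the occurrences of $\omega$ in $F_\infty$ and the occurrences of $s_k$ in $F_\infty$. By Proposition 1.5, $\omega$ has a unique singular decomposition $\omega = \mu_1 \ast s_k \ast \mu_2$, so each occurrence $\omega_p$ contains exactly one copy of $s_k$ at the fixed offset $|\mu_1|$. The map sending $\omega_p$ to this internal copy of $s_k$ is automatically injective and order-preserving; the substantive claim is surjectivity, namely that for every occurrence $s_{k,q}$ in $F_\infty$ the preceding $|\mu_1|$ letters equal $\mu_1$ and the following $|\mu_2|$ letters equal $\mu_2$. For this I will invoke Proposition 1.7(2), which places each gap between consecutive $s_k$'s in $\{s_{k-1}, s_{k+1}\}$, together with the combinatorial identity that $s_{k-1}$ is both a prefix and a suffix of $s_{k+1}$ (verifiable directly from $s_k = \beta F_k \alpha^{-1}$ and the recurrence $F_{k+1} = F_k F_{k-1}$). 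This immediately handles all contexts of length at most $f_{k-1}$, since both possible gap values agree on their first and last $f_{k-1}$ letters.

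For longer contexts, which can arise in the larger types T1.2, T1.3, T2.2, T2.3 of Definition 1.6, I will perform a case analysis by type. In each case the hypothesis $sk(\omega) = s_k$ forbids $\omega$ from containing $s_{k+1}$ and forces $s_k$ to appear only once inside $\omega$; these constraints, combined with the Fibonacci structure of the gap sequence $\{\nu_{s_k,p}\}$ given in Proposition 1.7(2), pin down the left and right contexts of every $s_{k,q}$ uniformly, so that no alternative extension of $s_k$ is compatible with the assumptions on $\omega$. The order-preserving bijection then yields $sk(\omega_p) = s_{k,p}$ and the decomposition in Part (1).

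For Part (2), read the segment of $F_\infty$ from the start of $\omega_p$ to the end of $\omega_{p+1}$ in two ways. By Proposition 1.7(2) applied between $s_{k,p}$ and $s_{k,p+1}$, it equals $\mu_1 \ast s_{k,p} \ast \nu_{s_k,p} \ast s_{k,p+1} \ast \mu_2$. By the definition of $\nu_{\omega,p}$ interpreted in the formal-inverse sense of Definition 1.2, it also equals $\omega_p \ast \nu_{\omega,p} \ast \omega_{p+1} = \mu_1 \ast s_{k,p} \ast \mu_2 \ast \nu_{\omega,p} \ast \mu_1 \ast s_{k,p+1} \ast \mu_2$. Cancelling the common prefix $\mu_1 \ast s_{k,p}$ and the common suffix $s_{k,p+1} \ast \mu_2$ gives $\mu_2 \ast \nu_{\omega,p} \ast \mu_1 = \nu_{s_k,p}$, hence $\nu_{\omega,p} = \mu_2^{-1} \ast \nu_{s_k,p} \ast \mu_1^{-1}$. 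The three cases of Definition 1.2 correspond to $|\nu_{s_k,p}|$ being greater than, equal to, or less than $|\mu_1| + |\mu_2|$; the cancellations are literal in the first two and correctly encode the overlap in the third, since $\mu_2$ is a prefix and $\mu_1$ a suffix of $\nu_{s_k,p}$ by the analysis in Part (1). The main obstacle is the case-by-type surjectivity argument in Part (1) for long contexts; Part (2) is then a short algebraic cancellation.
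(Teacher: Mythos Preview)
Your bijection framing for Part~(1) and the cancellation argument for Part~(2) are both correct and match the paper's logic (the paper's ``proof'' of Theorem~2.1 is essentially just a diagram, with the real work for Part~(1) done earlier in Lemma~3.3, which establishes precisely the order-preserving bijection you describe via Claims~(1)--(4)).

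Where you diverge is in the surjectivity step. You split into short contexts (handled by the prefix/suffix coincidence $s_{k+1}=s_{k-1}s_{k-2}s_{k-1}$) and long contexts (handled by an unspecified case analysis over the types of Definition~1.6). The paper does not make this split. Instead, using Proposition~1.7(2) and Lemma~3.2, it enumerates the five possible two-sided neighborhoods $\cdots s_k\,\nu\,\underline{s_{k,q}}\,\nu'\,s_k\cdots$ with $\nu,\nu'\in\{s_{k-1},s_{k+1}\}$ (plus the initial case $q=1$), and observes that the hypothesis $sk(\omega)=s_k$ forbids $\omega$ from containing either $s_{k+1}$ or $s_ks_{k-1}s_k=s_{k+2}$. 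Combined with the identity $s_{k+1}\alpha^{-1}=s_{k-1}s_k\beta^{-1}$ (Lemma~3.1(1)), this forces $\omega\prec\theta_k:=\alpha^{-1}s_{k+1}\,\underline{s_k}\,s_{k+1}\alpha^{-1}$ in \emph{every} case, so $\mu_1$ is a fixed suffix of $\alpha^{-1}s_{k+1}$ and $\mu_2$ a fixed prefix of $s_{k+1}\alpha^{-1}$, uniformly in $q$. This is cleaner than a type-by-type analysis and yields Corollary~3.4 as a byproduct.

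One caution about your route: the explicit decompositions attached to the types (Theorem~2.3) are proved in the paper \emph{using} Corollary~3.4, which in turn rests on Lemma~3.3, i.e.\ on Part~(1) itself. So if your ``case analysis by type'' leans on those formulas, the argument becomes circular. If instead you only use Definition~1.6 as a length classification and redo the context analysis from scratch, your approach works but ends up reproducing the paper's five-case argument in a more fragmented form.
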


Using Theorem 2.1, we can prove Theorem 2.2. Furthermore, by Theorem 2.1, we know the key to determine the expressions of gaps is to find out the expressions of $\mu_1$, $\mu_2$ and $sk(\omega)$ for each $\omega$. Theorem 2.3 give us the expressions of them, where we divide the factors into six types. The expressions of gaps are given in Theorem 2.4.

\begin{Theorem}[Gap and gap sequence]\

(1) Any factor $\omega\prec F_{\infty}$ has exactly two distinct gaps $\nu_{\omega,1}$ and $\nu_{\omega,2}$;

(2) The gap sequence $\{\nu_{\omega,p}\}_{p\ge 1}$ is the Fibonacci sequence.
\end{Theorem}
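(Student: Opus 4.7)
The plan is to reduce Theorem 2.2 entirely to Proposition 1.7(2) by means of the decomposition established in Theorem 2.1(2). Throughout, let $\omega\prec F_\infty$ with $\omega\not\in\{\varepsilon,ab,ba,aba\}$ and $sk(\omega)=s_k$; since the proof of Proposition 1.5 forces $k\geq 1\geq 0$, Proposition 1.7(2) applies. The four excluded short factors can be handled by direct inspection.

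For part (1), I would invoke Theorem 2.1(2), which yields
$$\nu_{\omega,p}=\mu^{-1}_2(\omega)\ast\nu_{s_k,p}\ast\mu^{-1}_1(\omega).$$
Because $\mu_1(\omega)$ and $\mu_2(\omega)$ depend only on $\omega$ and not on $p$, the entire $p$-dependence of $\nu_{\omega,p}$ is carried by $\nu_{s_k,p}$. Proposition 1.7(2) tells us that $\nu_{s_k,p}$ takes only the two values $s_{k+1}$ and $s_{k-1}$; substitution therefore gives exactly the two candidates
$$\mu^{-1}_2(\omega)\ast s_{k+1}\ast\mu^{-1}_1(\omega)\quad\text{and}\quad\mu^{-1}_2(\omega)\ast s_{k-1}\ast\mu^{-1}_1(\omega),$$
which I will label $\nu_{\omega,1}$ and $\nu_{\omega,2}$ respectively. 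These two words are distinct because their formal lengths differ by $f_{k+1}-f_{k-1}=f_k>0$, so $\omega$ has exactly two distinct gaps.

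For part (2), I would observe that the letter map $\varphi\colon s_{k+1}\mapsto\nu_{\omega,1}$, $s_{k-1}\mapsto\nu_{\omega,2}$ is a bijection between the two-letter alphabets $\{s_{k+1},s_{k-1}\}$ and $\{\nu_{\omega,1},\nu_{\omega,2}\}$, and the formula above shows that $\{\nu_{\omega,p}\}_{p\geq1}$ is the term-by-term image of $\{\nu_{s_k,p}\}_{p\geq1}$ under $\varphi$. Since Proposition 1.7(2) identifies the latter as the Fibonacci sequence over $\{s_{k+1},s_{k-1}\}$, applying $\varphi$ shows the former is the Fibonacci sequence over $\{\nu_{\omega,1},\nu_{\omega,2}\}$, proving (2).

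The main obstacle lies not in this derivation but upstream, in Theorem 2.1(2): once one possesses the identity $\nu_{\omega,p}=\mu_2^{-1}(\omega)\ast\nu_{s_k,p}\ast\mu_1^{-1}(\omega)$ with $\mu_1,\mu_2$ independent of $p$, Theorem 2.2 reduces to a relabeling argument. The only compatibility requiring care is that the inverse-word convention in Definition 1.2 (which encodes overlap by the formal cancellation $uu^{-1}=\varepsilon$) interact correctly with the concatenation in the decomposition, so that the formula is valid across all three cases---adjacent, separated, and overlapped---of consecutive occurrences of $\omega$; but this is already absorbed into the statement of Theorem 2.1 and requires no new work here.
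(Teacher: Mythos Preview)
Your proposal is correct and follows essentially the same approach as the paper: both invoke Theorem~2.1(2) to write $\nu_{\omega,p}=\mu_2^{-1}(\omega)\ast\nu_{s_k,p}\ast\mu_1^{-1}(\omega)$ and then appeal to Proposition~1.7(2) to transfer the two-element gap set and the Fibonacci structure from $s_k$ to $\omega$. Your version is in fact slightly more careful than the paper's, since you explicitly justify distinctness of the two gaps via the length difference $f_{k+1}-f_{k-1}=f_k>0$ and flag the excluded short factors, points the paper leaves implicit.
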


\noindent\textbf{Remark.}
Theorem 2.2(1) has been proved by Vuillon\cite{V2001}, we will give another simple proof in this case.

\vspace{0.2cm}

\noindent\textbf{Example.}
The factor $\omega=abaa$ has exact two gaps $\nu_{\omega,1}=b=:A$, $\nu_{\omega,2}=a^{-1}=:B$. Then
the sequence $\{\nu_{\omega,p}\}_{p\ge 1}=ABAABABA\cdots$ is the Fibonacci sequence over the alphabet $\{A,B\}$.
\begin{equation*}
\begin{split}
F_\infty=&(abaa)\underbrace{b}_A(aba\underbrace{(a)}_B baa)\underbrace{b}_A(abaa)\underbrace{b}_A(aba\underbrace{(a)}_B baa)\underbrace{b}_A(aba\underbrace{(a)}_B baa)\underbrace{b}_A\\
&(abaa)\underbrace{b}_A(aba\underbrace{(a)}_B baa)\underbrace{b}_A(abaa)\underbrace{b}_A(aba\underbrace{(a)}_B baa)\cdots
\end{split}
\end{equation*}

Given a factor $\omega$, by Proposition 1.5, $\omega$ has a unique decomposition by its singular kernel
$\omega=\mu_1(\omega)sk(\omega)\mu_2(\omega)$. For giving the explicitly expression of $\mu_1(\omega)$ and $\mu_2(\omega)$,
we need to divide all factors of $F_\infty$ in to six disjoined types $Ti,j(1\le i\le2, 1\le j\le 3)$(see Definition 1.6).

\begin{Theorem}[Decomposition of $\omega$ by $sk(\omega)$, more explicitly]\

\vspace{-0.6cm}
\begin{equation*}
\omega\in\emph{T1.1}\Rightarrow \mu_1(\omega)=\mu_2(\omega)=\varepsilon.
\end{equation*}

\vspace{-0.6cm}
\begin{equation*}
\omega\in\emph{T1.2}\Rightarrow\begin{cases}
\mu_1(\omega)=s_{k-2}[i-f_{k-1}+2,f_{k-2}],\\
\mu_2(\omega)=s_{k-2}[1,i-f_{k-1}+1],~~f_{k-1}-1\leq i\leq f_k-1.
\end{cases}
\end{equation*}

\vspace{-0.6cm}
\begin{equation*}
\omega\in\emph{T1.3}\Rightarrow\begin{cases}
\mu_1(\omega)=s_{k-1}[i+2,f_{k-1}],\\
\mu_2(\omega)=s_{k-1}[1,i+1],~~0\leq i\leq f_{k-1}-2.
\end{cases}
\end{equation*}

\vspace{-0.6cm}
\begin{equation*}
\omega\in\emph{T2.1}\Rightarrow\begin{cases}
\mu_1(\omega)=s_{k-1}[f_{k-1}-i+1,f_{k-1}],\\
\mu_2(\omega)=s_{k-1}[1,n-f_k-i],~~0\leq i\leq n-f_k.
\end{cases}
\end{equation*}

\vspace{-0.6cm}
\begin{equation*}
\omega\in\emph{T2.2}\Rightarrow\begin{cases}
\mu_1(\omega)=s_k[f_k-i+1,f_k],\\
\mu_2(\omega)=s_k[1,n-f_{k-1}-i],~~0\leq i\leq n-f_{k-1}.
\end{cases}
\end{equation*}

\vspace{-0.6cm}
\begin{equation*}
\omega\in\emph{T2.3}\Rightarrow\begin{cases}
\mu_1(\omega)=s_{k-1}[f_{k+1}-n-i,f_{k-1}],\\
\mu_2(\omega)=s_{k-1}[1,f_{k-1}-i-1],~~0\leq i\leq f_{k+1}-n-2.
\end{cases}
\end{equation*}
\end{Theorem}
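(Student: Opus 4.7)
The plan is to proceed by case analysis on the six types of Definition 1.6. Case T1.1 is immediate: since $|\omega| = f_k = |s_k|$, the singular kernel fills $\omega$, so $\omega = s_k$ and $\mu_1(\omega) = \mu_2(\omega) = \varepsilon$. For the remaining five cases, the central tool is Proposition 1.7(2) applied with $k$ replaced by the order $k'$ of $sk(\omega)$: that proposition exhibits a tail of $F_\infty$ as alternating copies of $s_{k'}$ and gap words, each gap being either $s_{k'-1}$ or $s_{k'+1}$. Combined with the uniqueness of the occurrence of $sk(\omega)$ inside $\omega$ (Proposition 1.5(1)), $\mu_1(\omega)$ must be a suffix of the gap word immediately preceding this occurrence and $\mu_2(\omega)$ must be a prefix of the gap word immediately following it.

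The second ingredient is the nesting identity $s_{k+2} = s_k s_{k-1} s_k$ already isolated in the proof of Proposition 1.5. After shifting indices, this gives $s_{k'+1} = s_{k'-1} s_{k'-2} s_{k'-1}$, so $s_{k'-1}$ is simultaneously a prefix and a suffix of $s_{k'+1}$. Consequently, provided $|\mu_1|$ and $|\mu_2|$ do not exceed the length of a judiciously chosen ``reference'' singular word, we can describe $\mu_1$ and $\mu_2$ uniformly as a suffix and a prefix of that single reference word, independently of whether the neighbouring gap in $F_\infty$ is the shorter $s_{k'-1}$ or the longer $s_{k'+1}$. The reference word appearing in each case of the theorem ($s_{k-2}$ in T1.2; $s_{k-1}$ in T1.3, T2.1, T2.3; $s_k$ in T2.2) is determined by the requirement that its length dominate $\max(|\mu_1|,|\mu_2|)$, which in turn is forced by the identity $|\mu_1| + |\mu_2| = |\omega| - f_{k'}$ together with the length bounds of the type.

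With these two ingredients each remaining case becomes a bounded verification: compute $|\mu_1| + |\mu_2|$, check the reference-length bound, parameterize the admissible splits $(|\mu_1|,|\mu_2|)$ by the integer $i$ via an affine relation, and read off the prefix/suffix formulas. Existence of every parameterized $\omega$ inside $F_\infty$ is automatic: by Proposition 1.7(2) the gap sequence of $s_{k'}$ is a Fibonacci sequence on $\{s_{k'-1}, s_{k'+1}\}$, which contains two consecutive copies of the longer gap $s_{k'+1}$, and this alone accommodates every $(|\mu_1|,|\mu_2|)$ allowed by the reference-length bound.

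The main obstacle is pinning down the exact endpoints of the interval for $i$ in each case, most delicately in T1.3 and T2.3. Here one must exclude the degenerate splits in which $\mu_1$ or $\mu_2$ becomes long enough that $\omega$ absorbs an entire adjacent singular word of order higher than $sk(\omega)$, since such a split would enlarge the kernel and violate membership in the type. For example, in T1.3 taking $|\mu_2| = 0$ gives $\omega = s_{k-1} s_{k-2}$, whose singular kernel is $s_{k-1}$ rather than $s_{k-2}$; excluding this and the symmetric case $|\mu_1| = 0$ produces precisely the stated range $0 \leq i \leq f_{k-1}-2$. Analogous kernel-preservation checks trim the index range to the claimed interval in the remaining cases.
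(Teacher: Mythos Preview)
Your plan is the paper's argument in outline: there the first two ingredients are packaged into Corollary~3.4 (every $\omega$ with $sk(\omega)=s_{k'}$ occurs, uniquely, inside $\theta_{k'}:=\alpha^{-1}s_{k'+1}s_{k'}s_{k'+1}\alpha^{-1}$), after which each case of Theorem~2.3 is a one-line read-off, the $\alpha^{-1}$ trimming of $\theta_{k'}$ playing exactly the role of your endpoint kernel-preservation checks.

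There is, however, a real gap in your second ingredient. The nesting $s_{k'+1}=s_{k'-1}s_{k'-2}s_{k'-1}$ only tells you that $s_{k'-1}$ is a prefix and a suffix of $s_{k'+1}$, so your reference-word trick is justified as written only when $|\mu_1|,|\mu_2|\le f_{k'-1}$. That is enough for T1.2 and T2.1, where the reference is the short gap $s_{k'-1}$. But in T1.3, T2.2 and T2.3 the reference is the \emph{long} gap $s_{k'+1}$, and $|\mu_1|$ or $|\mu_2|$ can range up to $f_{k'+1}-1$. When the actual neighbouring gap in $F_\infty$ happens to be the short one $s_{k'-1}$, your assertion ``$\mu_1(\omega)$ must be a suffix of the gap word immediately preceding'' is false as stated: $\mu_1$ spills past $s_{k'-1}$ into the adjacent copy of $s_{k'}$, and Proposition~1.5(1) alone does not forbid this (a proper suffix of $s_{k'}$ followed by $s_{k'-1}s_{k'}$ need not contain a second full $s_{k'}$). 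What makes the argument go through is the stronger identity of Lemma~3.1(1), namely $\alpha^{-1}s_{k'+1}=\beta^{-1}s_{k'}s_{k'-1}$ and its mirror $s_{k'+1}\alpha^{-1}=s_{k'-1}s_{k'}\beta^{-1}$: these say that $s_{k'}s_{k'-1}$ and $s_{k'+1}$ share the same suffix (resp.\ prefix) of every length up to $f_{k'+1}-1$, so the overspill is still a suffix of $s_{k'+1}$. This is precisely the content the paper isolates in Lemma~3.3/Corollary~3.4, and it is the step your outline omits.
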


\vspace{0.2cm}

\noindent\textbf{Example.}

(1) Take $\omega=babaabab$, $|\omega|=8$, $sk(\omega)=babaabab=s_k$, so $\omega\in$T1.1, $\mu_1=\mu_2=\varepsilon$.

(2) Take $\omega=baabaaba$, $|\omega|=8$, $sk(\omega)=aabaa=s_{k-1}$, so $\omega\in$T1.2, $\mu_1=s_2[3,3]=b$ and $\mu_2=s_2[1,2]=ba$.
The position of $\omega$ in $s_{4-2}s_{4-1}s_{4-2}$ is shown as $ba\underline{b|aabaa|ba}b$.

(3) Take $\omega=baababaa$, $|\omega|=8$, $sk(\omega)=bab=s_{k-2}$, so $\omega\in$T1.3, $\mu_1=s_3[3,5]=baa$ and $\mu_2=s_3[1,2]=aa$.
The position of $\omega$ in $a^{-1}s_{4-1}s_{4-2}s_{4-1}a^{-1}$ is shown as $a\underline{baa|bab|aa}ba$.

(4) Take $\omega=ababaabab$, $|\omega|=9$, $sk(\omega)=babaabab=s_k$, so $\omega\in$T2.1, $\mu_1=s_3[5,5]=a$ and $\mu_2=s_3[1,0]=\varepsilon$.
The position of $\omega$ in $s_{4-1}s_{4}s_{4-1}$ is shown as $aaba\underline{a|babaabab|}aabaa$.

(5) Take $\omega=abaabaaba$, $|\omega|=9$, $sk(\omega)=aabaa=s_{k-1}$, so $\omega\in$T2.2, $\mu_1=s_4[7,8]=ab$ and $\mu_2=s_4[1,2]=ba$.
The position of $\omega$ in $s_{4}s_{4-1}s_{4}$ is shown as $babaab\underline{ab|aabaa|ba}baabab$.

(6) Take $\omega=baababaab$, $|\omega|=9$, $sk(\omega)=bab=s_{k-2}$, so $\omega\in$T2.3, $\mu_1=s_3[3,5]=baa$ and $\mu_2=s_3[1,3]=aab$.
The position of $\omega$ in $a^{-1}s_{4-1}s_{4-2}s_{4-1}a^{-1}$ is shown as $a\underline{baa|bab|aab}a$.

\begin{Theorem}[Expressions of $\nu_{\omega,1}$ and $\nu_{\omega,2}$]\

\vspace{-0.6cm}
\begin{equation*}
\omega\in\emph{T1.1}\Rightarrow\begin{cases}
\nu_{\omega,1}=s_{k+1},&|\nu_{\omega,1}|=f_{k+1}>0,\\
\nu_{\omega,2}=s_{k-1},&|\nu_{\omega,2}|=f_{k-1}>0.
\end{cases}
\end{equation*}

\vspace{-0.6cm}
\begin{equation*}
\omega\in\emph{T1.2}\Rightarrow\begin{cases}
\nu_{\omega,1}=C_{i-f_{k-1}}(F_{k-1}),&|\nu_{\omega,1}|=f_{k-1}>0,\\
\nu_{\omega,2}=\varepsilon,&|\nu_{\omega,2}|=0.
\end{cases}
\end{equation*}

\vspace{-0.6cm}
\begin{equation*}
\omega\in\emph{T1.3}\Rightarrow\begin{cases}
\nu_{\omega,1}=\varepsilon,&|\nu_{\omega,1}|=0,\\
\nu_{\omega,2}^{-1}=C_i(F_{k-2}),&|\nu_{\omega,2}|=-f_{k-2}<0.
\end{cases}
\end{equation*}

\vspace{-0.6cm}
\begin{equation*}
\omega\in\emph{T2.1}\Rightarrow\begin{cases}
\nu_{\omega,1}=s_{k+1}[n-f_k-i+1,f_{k+1}-i],&|\nu_{\omega,1}|=f_{k+2}-n>0,\\
\nu_{\omega,2}=s_{k-1}[n-f_k-i+1,f_{k-1}-i],&|\nu_{\omega,2}|=f_{k+1}-n>0.
\end{cases}
\end{equation*}

\vspace{-0.6cm}
\begin{equation*}
\omega\in\emph{T2.2}\Rightarrow\begin{cases}
\nu_{\omega,1}=s_k[n-f_{k-1}-i+1,f_k-i],&|\nu_{\omega,1}|=f_{k+1}-n>0,\\
\nu_{\omega,2}^{-1}=F_{k+1}[f_k-i,n-i-1],&|\nu_{\omega,2}|=f_k-n<0.
\end{cases}
\end{equation*}

\vspace{-0.6cm}
\begin{equation*}
\omega\in\emph{T2.3}\Rightarrow\begin{cases}
\nu_{\omega,1}^{-1}=s_{k-1}[f_{k+1}-n-i,f_{k-1}-i-1],&|\nu_{\omega,1}|=f_k-n<0,\\
\nu_{\omega,2}^{-1}=F_k[f_{k+1}-n-i-1,f_k-i-2],&|\nu_{\omega,2}|=f_{k-1}-n<0.
\end{cases}
\end{equation*}
\end{Theorem}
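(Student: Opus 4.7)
The plan is to combine Theorem 2.1(2), which reads $\nu_{\omega,p} = \mu_2^{-1}(\omega) \ast \nu_{s_k,p} \ast \mu_1^{-1}(\omega)$, with the explicit formulas for $\mu_1(\omega)$ and $\mu_2(\omega)$ just established in Theorem 2.3 and the fact from Proposition 1.7(2) that the two distinct gaps of $s_k$ are $\nu_{s_k,1}=s_{k+1}$ and $\nu_{s_k,2}=s_{k-1}$. In each of the six types, the conclusion then reduces to simplifying a concatenation of three explicit subwords of singular words.

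The calculation proceeds in two steps within each type. The first step is a length count: from $|\mu_1|+|\mu_2|+|sk(\omega)|=|\omega|$ one immediately has
\[
|\nu_{\omega,p}|=|\nu_{s_k,p}|-|\mu_1(\omega)|-|\mu_2(\omega)|,
\]
and substituting $|\nu_{s_k,1}|=f_{k+1}$ or $|\nu_{s_k,2}|=f_{k-1}$ produces the signs and absolute values displayed in the theorem, thereby identifying which gap is separated, adjacent, or overlapped without touching any letters. The second step is to identify the word itself. For the separated cases (T1.1, T1.2, T2.1, and the first gap of T2.2) I expand $s_{k\pm1}=\beta F_{k\pm1}\alpha^{-1}$, use the recursion $F_{k+1}=F_kF_{k-1}$, and exploit that $\mu_1(\omega)$ is a suffix and $\mu_2(\omega)$ a prefix of a singular word adjacent to the kernel; the cancellations from $\mu_2^{-1}\cdots\mu_1^{-1}$ then collapse the middle word to the claimed indexed subword, such as $s_{k+1}[\cdot,\cdot]$ in T2.1, or to the cyclic conjugate $C_{i-f_{k-1}}(F_{k-1})$ in T1.2, which appears because trimming equal-length prefix and suffix fragments from $s_k$ leaves precisely a rotation of $F_{k-1}$. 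For the overlap cases (T1.3, T2.3, and the second gap of T2.2) the inverse notation merely records the letters shared by the consecutive occurrences $\omega_p,\omega_{p+1}$, and the same substitution, read inside the host factor $s_{k'}$ or $F_{k'}$ that contains both copies, yields the asserted $s_{k'}[r,s]$ or $F_{k'}[r,s]$.

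The main obstacle is the overlap case, where the formal product $\mu_2^{-1}\nu_{s_k,p}\mu_1^{-1}$ has negative length and must be interpreted as an inverse factor; for this to be meaningful one must verify that the last $|\mu_1(\omega)|$ letters of $\nu_{s_k,p}$ coincide with $\mu_1(\omega)$, and symmetrically for $\mu_2$. This compatibility is delivered by the positive-separation structure in Proposition 1.7(2): around its kernel, $\omega_p$ sits inside a canonical block of the form $s_{k'\pm1}s_{k'}s_{k'\pm1}$, and the index ranges prescribed in Theorem 2.3 are exactly those that force $\mu_1$ and $\mu_2$ to match the appropriate prefixes and suffixes of the neighbouring singular words. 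Once this matching is checked, the concrete endpoints $[r,s]$ in the six formulas fall out from straightforward Fibonacci arithmetic on the indices, and the theorem is complete.
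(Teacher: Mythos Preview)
Your proposal is correct and follows the same route as the paper: plug the explicit $\mu_1,\mu_2$ from Theorem~2.3 and the two gaps $\nu_{s_k,1}=s_{k+1}$, $\nu_{s_k,2}=s_{k-1}$ from Proposition~1.7(2) into the identity $\nu_{\omega,p}=\mu_2^{-1}\nu_{sk(\omega),p}\mu_1^{-1}$ of Theorem~2.1, and simplify type by type.

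One point deserves sharpening. In the overlapped cases your compatibility check is stated backwards: it is not that ``the last $|\mu_1|$ letters of $\nu_{s_k,p}$ coincide with $\mu_1$'', since in T1.3, T2.2, T2.3 the middle word $\nu_{sk(\omega),2}$ (e.g.\ $s_{k-3}$ when $sk(\omega)=s_{k-2}$) is \emph{shorter} than $|\mu_1|+|\mu_2|$. The paper instead passes to the inverse $\nu_{\omega,2}^{-1}=\mu_1\,\nu_{sk(\omega),2}^{-1}\,\mu_2$ and then performs a three-way case split according to whether the short singular word in the middle is a suffix of $\mu_1$, a prefix of $\mu_2$, or neither; the key tool that makes each branch collapse is Lemma~3.1(2), $s_m=s_{m-2}s_{m-3}s_{m-2}$, which exhibits the needed prefix/suffix. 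Your sketch (``the index ranges force $\mu_1,\mu_2$ to match\ldots'') is the right intuition, but in practice this case analysis is where the actual work lies, and it is slightly more delicate than ``straightforward Fibonacci arithmetic''.
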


\noindent\textbf{Example.}

(1) Take $\omega=babaabab\in$T1.1, then $n=8$, $k=4$.

\vspace{-0.6cm}
\begin{equation*}
\Rightarrow\begin{cases}
\nu_{\omega,1}=s_5=aabaababaabaa,&|\nu_{\omega,1}|=f_5=13>0,\\
\nu_{\omega,2}=s_3=aabaa,&|\nu_{\omega,2}|=f_3=5>0.
\end{cases}
\end{equation*}
\vspace{-0.4cm}

(2) Take $\omega=baabaaba\in$T1.2, then $n=8$, $k=4$, $i=6$.

\vspace{-0.6cm}
\begin{equation*}
\Rightarrow\begin{cases}
\nu_{\omega,1}=C_1(F_3)=baaba,&|\nu_{\omega,1}|=f_3=5>0,\\
\nu_{\omega,2}=\varepsilon,&|\nu_{\omega,2}|=0.
\end{cases}
\end{equation*}
\vspace{-0.4cm}

(3) Take $\omega=baababaa\in$T1.3, then $n=8$, $k=4$, $i=1$.

\vspace{-0.6cm}
\begin{equation*}
\Rightarrow\begin{cases}
\nu_{\omega,1}=\varepsilon,&|\nu_{\omega,1}|=0,\\
\nu_{\omega,2}^{-1}=C_1(F_2)=baa,&|\nu_{\omega,2}|=-f_2=-3<0.
\end{cases}
\end{equation*}
\vspace{-0.4cm}

(4) Take $\omega=ababaabab\in$T2.1, then $n=9$, $k=4$, $i=1$.

\vspace{-0.6cm}
\begin{equation*}
\Rightarrow\begin{cases}
\nu_{\omega,1}=s_5[1,12]=aabaababaaba,&|\nu_{\omega,1}|=f_6-9=12>0,\\
\nu_{\omega,2}=s_3[1,4]=aaba,&|\nu_{\omega,2}|=f_5-9=4>0.
\end{cases}
\end{equation*}
\vspace{-0.4cm}

(5) Take $\omega=abaabaaba\in$T2.2, then $n=9$, $k=4$, $i=2$.

\vspace{-0.6cm}
\begin{equation*}
\Rightarrow\begin{cases}
\nu_{\omega,1}=s_4[3,6]=baab,&|\nu_{\omega,1}|=f_5-9=4>0,\\
\nu_{\omega,2}^{-1}=F_5[6,6]=a,&|\nu_{\omega,2}|=f_4-9=-1<0.
\end{cases}
\end{equation*}
\vspace{-0.4cm}

(6) Take $\omega=baababaab\in$T2.3, then $n=9$, $k=4$, $i=1$.

\vspace{-0.6cm}
\begin{equation*}
\Rightarrow\begin{cases}
\nu_{\omega,1}^{-1}=s_3[3,3]=b,&|\nu_{\omega,1}|=f_4-9=-1<0,\\
\nu_{\omega,2}^{-1}=F_4[2,5]=baab,&|\nu_{\omega,2}|=f_3-9=-4<0.
\end{cases}
\end{equation*}


\vspace{0.5cm}

\stepcounter{section}

\noindent\textbf{\large{3.~Proofs of Theorem 2.1 and Theorem 2.2}}

\vspace{0.4cm}

In this section, we will prove Theorem 2.1 and Theorem 2.2.

Theorem 2.1 establishes the relations among four sequences below:

(1) Factor sequence $\{\omega_p\}_{p\geq1}$, $\omega_p$ is the $p$-th appearance of $\omega$;

(2) Factor sequence $\{s_{k,p}\}_{p\geq1}$, $s_k$ is the singular kernel of $\omega$;

(3) Gap sequence $\{\nu_{\omega,p}\}_{p\geq1}$, $\nu_{\omega,p}$ is the gap between $\omega_p$ and $\omega_{p+1}$;

(4) Gap sequence $\{\nu_{s_k,p}\}_{p\geq1}$, $\nu_{s_k,p}$ is the gap between $s_{k,p}$ and $s_{k,p+1}$.

As one of our main conclusion, Theorem 2.2 shows that there are exactly two distinct elements and they constitutes still a Fibonacci sequence over a new alphabet.

We give some lemmas first which are very useful in the studies.

\begin{lemma} Let $s_k$ be the $k$-th singular word. Then:

(1) $s_k=\beta\alpha^{-1}s_{k-1}s_{k-2}=s_{k-2}s_{k-1}\alpha^{-1}\beta;$

(2) $s_k=s_{k-2}s_{k-3}s_{k-2}.$
\end{lemma}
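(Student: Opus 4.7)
The plan is to prove all three identities by direct algebraic expansion from the definition $s_i = \beta_i F_i\alpha_i^{-1}$, combined with the Fibonacci recurrence $F_i = F_{i-1}F_{i-2}$. First I would record the letter bookkeeping: since the last letter of $F_k$ is $\alpha$, the parity convention forces the last letters of $F_{k-1}$, $F_{k-2}$, $F_{k-3}$ to be $\beta,\alpha,\beta$ respectively. Hence, relative to the fixed pair $(\alpha,\beta)$ attached to index $k$, we have $s_{k-1}=\alpha F_{k-1}\beta^{-1}$, $s_{k-2}=\beta F_{k-2}\alpha^{-1}$, and $s_{k-3}=\alpha F_{k-3}\beta^{-1}$. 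This bookkeeping guarantees that when concatenating consecutive singular words, adjacent letter/inverse pairs cancel cleanly.

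For the first equality in (1), substituting gives
\begin{equation*}
\beta\alpha^{-1}s_{k-1}s_{k-2}=\beta\alpha^{-1}\cdot\alpha F_{k-1}\beta^{-1}\cdot\beta F_{k-2}\alpha^{-1}=\beta F_{k-1}F_{k-2}\alpha^{-1}=\beta F_k\alpha^{-1}=s_k,
\end{equation*}
after the cancellations $\alpha^{-1}\alpha=\beta^{-1}\beta=\varepsilon$. For (2) the same substitution, followed by applying the Fibonacci recurrence twice, yields
\begin{equation*}
s_{k-2}s_{k-3}s_{k-2}=\beta F_{k-2}F_{k-3}F_{k-2}\alpha^{-1}=\beta F_{k-1}F_{k-2}\alpha^{-1}=\beta F_k\alpha^{-1}=s_k,
\end{equation*}
using $F_{k-2}F_{k-3}=F_{k-1}$ and then $F_{k-1}F_{k-2}=F_k$.

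The delicate case, and the main obstacle, is the second equality in (1), $s_k=s_{k-2}s_{k-1}\alpha^{-1}\beta$. Here the same substitution produces $\beta F_{k-2}F_{k-1}\beta^{-1}\alpha^{-1}\beta$, but $F_{k-2}F_{k-1}$ is \emph{not} $F_k$; the two words $F_k=F_{k-1}F_{k-2}$ and $F_{k-2}F_{k-1}$ have the same length and agree except that their last two letters are interchanged. Since $F_k$ ends in $\beta\alpha$ (its penultimate letter is the last letter of the trailing copy of $F_{k-1}$, namely $\beta$), one obtains the twisted identity $F_{k-2}F_{k-1}=F_k\,\alpha^{-1}\beta^{-1}\alpha\beta$, which I would establish by a short induction on $k$ using the recurrence. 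Plugging this into the expression above and telescoping the cancellations $\beta\beta^{-1}$, $\alpha\alpha^{-1}$, $\beta^{-1}\beta$ collapses $\beta F_{k-2}F_{k-1}\beta^{-1}\alpha^{-1}\beta$ down to $\beta F_k\alpha^{-1}=s_k$. Once the last-two-letter swap is in hand, the verification is mechanical; a possible alternative is to deduce this identity from the first equality of (1) by invoking the fact that every $s_k$ is a palindrome, but the direct algebraic route is cleaner and self-contained.
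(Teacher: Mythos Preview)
Your approach is essentially the paper's: the first equality in (1) is obtained identically, and for the second equality both you and the paper reduce to the ``swap the last two letters'' identity (the paper writes it as $F_k=F_{k-2}F_{k-1}\beta^{-1}\alpha^{-1}\beta\alpha$, equivalent to your $F_{k-2}F_{k-1}=F_k\alpha^{-1}\beta^{-1}\alpha\beta$), established by induction. One small slip: your parenthetical that the penultimate letter of $F_k$ is ``the last letter of the trailing copy of $F_{k-1}$'' is wrong, since in $F_k=F_{k-1}F_{k-2}$ it is $F_{k-2}$ that trails; the correct reason $F_k$ ends in $\beta\alpha$ is that $F_{k-2}$ already does, by induction --- but this does not affect your argument, as you prove the swap identity by induction anyway. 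For (2) you expand directly via $F_{k-2}F_{k-3}=F_{k-1}$, whereas the paper reuses (1) applied to $s_{k-1}$; both are one-liners.
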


\begin{proof}
(1) As we declared in Section 1, $\alpha$ is the last letter of $F_k$ and $F_{k-2}$, $\beta$ is the last letter of $F_{k-1}$, $\beta\neq\alpha$.
Since $F_k=F_{k-1}F_{k-2}$, we know $\beta F_k\alpha^{-1}=\beta\alpha^{-1}\alpha F_{k-1}\beta^{-1}\beta F_{k-2}\alpha^{-1}$. By the definition of singular word, $s_k=\beta\alpha^{-1}s_{k-1}s_{k-2}$, we thus get the first equality in (1).

By induction, we can prove $F_k=F_{k-2}F_{k-1}\beta^{-1}\alpha^{-1}\beta\alpha$, so
$$\beta F_k\alpha^{-1}=\beta F_{k-2}F_{k-1}\beta^{-1}\alpha^{-1}\beta\alpha\alpha^{-1}=\beta F_{k-2}\alpha^{-1}\alpha F_{k-1}\beta^{-1}\alpha^{-1}\beta,$$
which yields that $s_k=s_{k-2}s_{k-1}\alpha^{-1}\beta$, and concludes the second equality in (1).

(2) By (1), $s_k=\beta\alpha^{-1}s_{k-1}s_{k-2}$ and $s_{k-1}=\alpha\beta^{-1}s_{k-2}s_{k-3}$, so $$s_k=\beta\alpha^{-1}\ast\alpha\beta^{-1}s_{k-2}s_{k-3}\ast s_{k-2}=s_{k-2}s_{k-3}s_{k-2}.$$
\end{proof}

\begin{lemma}
$\prod^{k-1}_{j=-1}s_j=\alpha^{-1}s_{k+1}$.
\end{lemma}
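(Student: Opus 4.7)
The plan is to prove the identity by induction on $k$, with Lemma 3.1(1) providing the key algebraic step. The main delicate point is tracking the letters $\alpha$ and $\beta$, which swap roles each time $k$ advances by one, since $\alpha$ is defined as the last letter of $F_k$ and the last letters of $F_k$ and $F_{k+1}$ are opposite. So the $\alpha$ on the right-hand side at level $k$ becomes $\beta$ at level $k+1$, and vice versa.

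For the base case I would take $k=0$, where the left-hand product collapses to the single term $s_{-1}=a$, while on the right $\alpha$ is the last letter of $F_0=a$, so $\alpha=a$ and $\alpha^{-1}s_1 = a^{-1}\cdot aa = a$. The identity holds. (One could also start from $k=-1$ with the empty product if one interprets the right-hand side accordingly, but starting at $k=0$ avoids an empty-product subtlety.)

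For the inductive step, assuming $\prod_{j=-1}^{k-1} s_j = \alpha^{-1} s_{k+1}$, I would right-multiply both sides by $s_k$ to obtain
\[
\prod_{j=-1}^{k} s_j \;=\; \alpha^{-1} s_{k+1} s_k.
\]
Then I would invoke Lemma 3.1(1) at index $k+2$, which reads $s_{k+2} = \beta'(\alpha')^{-1} s_{k+1} s_k$, where $\alpha'$ is the last letter of $F_{k+2}$ and $\beta'$ is the opposite letter. Since $F_{k+2}$ and $F_k$ have the same last letter, $\alpha'=\alpha$ and $\beta'=\beta$, giving $s_{k+2}=\beta\alpha^{-1} s_{k+1} s_k$, hence $\alpha^{-1} s_{k+1} s_k = \beta^{-1} s_{k+2}$. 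Because $\beta$ is the last letter of $F_{k+1}$, this is precisely the statement at level $k+1$, closing the induction.

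The only real obstacle is the symbolic bookkeeping between $\alpha$ and $\beta$ as the parity of $k$ changes; the algebraic cancellation itself is a one-line consequence of Lemma 3.1(1), so I do not expect any deeper difficulty.
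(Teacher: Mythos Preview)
Your proposal is correct and follows essentially the same inductive scheme as the paper. The only cosmetic differences are that the paper also checks $k=1$ as a second base case (not strictly needed) and, in the inductive step, unfolds the definitions $s_{k+1}=\alpha F_{k+1}\beta^{-1}$, $s_k=\beta F_k\alpha^{-1}$ and uses $F_{k+2}=F_{k+1}F_k$ directly instead of citing Lemma~3.1(1); since Lemma~3.1(1) is proved by exactly that unfolding, the two computations are the same.
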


\begin{proof} By induction.
(1) When $k=0$, $\prod^{-1}_{j=-1}s_j=s_{-1}=a$ and $\alpha^{-1}s_{1}=a^{-1}aa=a$, where $\alpha$ is the last letter of $F_k$. When $k=1$, $\prod^{0}_{j=-1}s_j=s_{-1}s_0=ab$ and $\alpha^{-1}s_{2}=b^{-1}bab=ab$. The proposition holds.

(2) Assume the conclusion holds for $k-1$, $\prod^{k-1}_{j=-1}s_j=\alpha^{-1}s_{k+1}$, then:
\begin{equation*}
\begin{split}
&\prod^{k}_{j=-1}s_j=\left(\prod^{k-1}_{j=-1}s_j\right)s_k=\alpha^{-1}s_{k+1}s_k=\alpha^{-1}\alpha F_{k+1}\beta^{-1}\beta F_k\alpha^{-1}\\
=&F_{k+1}F_k\alpha^{-1}=F_{k+2}\alpha^{-1}=\beta^{-1}s_{k+2},
\end{split}
\end{equation*}
where $\alpha$ is the last letter of $F_k$, $\beta$ is the last letter of $F_{k\pm1}$.
Thus the conclusion holds for $k$, and we prove the proposition.
\end{proof}

\noindent\textbf{Remark.} As a simple corollary of Lemma 3.2, we have $\sum^{k-1}_{j=-1}f_j=f_{k+1}-1$.

\begin{lemma}
For any $\omega$ fixed, let $sk(\omega)=s_k$, then $sk(\omega_p)=s_{k,p}$, i.e., the singular kernel of $\omega_p$ is equal to $s_{k,p}$ by location.

\end{lemma}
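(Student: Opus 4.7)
The plan is to exhibit an order-preserving bijection between the set of $\omega$-occurrences in $F_\infty$ and the set of $s_k$-occurrences, so that matching the $p$-th element of each sequence yields $sk(\omega_p)=s_{k,p}$. By Proposition 1.5, sending $\omega_p$ to its unique singular kernel defines a map $\Phi(p)=q_p$ with $sk(\omega_p)=s_{k,q_p}$; this map is automatically order-preserving and injective, so the real content of the lemma is the surjectivity of $\Phi$. Writing $\omega=\mu_1\, s_k\, \mu_2$, surjectivity means: for every $q$, the $|\mu_1|$ letters of $F_\infty$ immediately preceding $s_{k,q}$ spell $\mu_1$, and the $|\mu_2|$ letters immediately following spell $\mu_2$.

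The crucial step is the bound $|\mu_1|,|\mu_2|\le f_{k+1}-1$, which I would extract directly from the hypothesis $sk(\omega)=s_k$. By Proposition 1.7(2) each gap adjacent to $s_{k,q}$ is $s_{k-1}$ or $s_{k+1}$. Choose an occurrence of $\omega$ with $q\ge 2$, which exists because $\omega$ appears infinitely often in $F_\infty$. If $|\mu_1|\ge f_{k+1}$, the $f_{k+1}$ letters preceding $s_{k,q}$ lie inside $\omega$ and are either the entire preceding gap $s_{k+1}$, forcing $s_{k+1}\prec\omega$ and contradicting $sk(\omega)=s_k$, or, when the gap is $s_{k-1}$, the concatenation $s_{k,q-1}s_{k-1}$, which makes $\omega$ contain two distinct copies of $s_k$ and contradicts the uniqueness of the singular kernel in Proposition 1.5. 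The argument for $|\mu_2|$ is symmetric.

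Given this bound, Lemma 3.1(1) produces the context-independence needed for surjectivity. The identity $s_{k+1}=\beta\alpha^{-1}s_k s_{k-1}$ shows that $s_{k+1}$ and $s_k s_{k-1}$ agree on their last $f_{k+1}-1$ letters, so the $|\mu_1|$-letter suffix preceding any $s_{k,q}$ in $F_\infty$ is the same word whether the preceding gap is $s_{k+1}$ (contributing its own suffix) or $s_{k-1}$ (contributing a suffix of $s_{k,q-1}s_{k-1}=s_k s_{k-1}$); the case $q=1$ is handled by Lemma 3.2, which identifies the initial prefix of $F_\infty$ as $\alpha^{-1}s_{k+1}$, whose length-$|\mu_1|$ suffix agrees with the generic one. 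The symmetric identity $s_{k+1}=s_{k-1}s_k\alpha^{-1}\beta$ (Lemma 3.1(1) at index $k+1$) handles $\mu_2$ analogously. Since $\omega_1$ realizes these universal contexts as $\mu_1$ and $\mu_2$, the same context surrounds every $s_{k,q}$, so each $s_{k,q}$ extends to an $\omega$-occurrence; hence $\Phi$ is surjective and $sk(\omega_p)=s_{k,p}$. The main obstacle is the two-case bound in the second paragraph; once it is in place, the remainder is routine bookkeeping on top of Lemma 3.1(1) and Lemma 3.2.
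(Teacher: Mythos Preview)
Your proposal is correct and follows essentially the same strategy as the paper: reduce the lemma to the four claims (injectivity, surjectivity, and the two uniqueness statements), with surjectivity being the only nontrivial one, and establish surjectivity by bounding $|\mu_1|,|\mu_2|\le f_{k+1}-1$ from the hypothesis $sk(\omega)=s_k$ together with Proposition~1.7(2). The only organizational difference is that the paper carries out the surjectivity step by enumerating the five possible neighbor patterns of $s_{k,q}$ and collapsing them all to the single template $\theta_k=\alpha^{-1}s_{k+1}\,s_k\,s_{k+1}\alpha^{-1}$ (this yields Corollary~3.4 as a byproduct), whereas you obtain the same context-independence algebraically via the identities of Lemma~3.1(1); the content is the same.
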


\begin{proof}
We will prove the following claims:

Claim (1): For any $p$, there exists $q$, such that $sk(\omega_p)=s_{k,q}$;

Claim (2): For any $q$, there exists $p$, such that $sk(\omega_p)=s_{k,q}$;

Claim (3): If both $s_{k,q_1}$ and $s_{k,q_2}$ are singular kernel of $\omega_p$, then $q_1=q_2$;

Claim (4): If both $sk(\omega_{p_1})$ and $sk(\omega_{p_2})$ are $s_{k,q}$, then $p_1=p_2$.

Since $s_k\prec\omega$, Claim (1) is trivial. By Proposition 1.5, each $\omega$ has a unique decomposition by its singular kernel, so both Claim (3) and (4) are true. It rest to prove Claim (2).

\textbf{Proof of Claim (2).}

Using the positively separate property of the singular word $s_k$ and Lemma 3.2, we get
$$F_\infty=\alpha^{-1}s_{k+1}s_ks_{k+1}s_ks_{k-1}s_ks_{k+1}s_ks_{k+1}s_ks_{k-1}s_k\cdots$$
So for any $q$, the singular words neighboring to $s_{k,q}$ have only five possible cases below.
We use 'underline' to emphasize the singular word $s_{k,q}$ we consider.

Case 1: $\cdots s_ks_{k+1}\underline{s_{k,q}}s_{k-1}s_k\cdots$

Case 2: $\cdots s_ks_{k+1}\underline{s_{k,q}}s_{k+1}s_k\cdots$

Case 3: $\cdots s_ks_{k-1}\underline{s_{k,q}}s_{k+1}s_k\cdots$

Case 4: $\cdots s_ks_{k-1}\underline{s_{k,q}}s_{k-1}s_k\cdots$

Case 5: when $q=1$, $\alpha^{-1}s_{k+1}\underline{s_{k,1}}s_{k+1}s_k\cdots$

We want to prove: there is a $\omega_p$ such that $\omega_p\succ s_{k,q}$ and $sk(\omega_p)=s_{k,q}$ in each case.
Since $sk(\omega)=s_k$, by Proposition 1.5, we only need to find two constant words $\mu_1$ and $\mu_2$ such that $\omega=\mu_1s_k\mu_2$ in all cases.

In case 1, since $sk(\omega)=s_k$, $\omega$ must be factor of $\alpha^{-1}s_{k+1}\underline{s_k}s_{k-1}s_k\beta^{-1}$, i.e., $\alpha^{-1}s_{k+1}\underline{s_k}s_{k+1}\alpha^{-1}$ with kernel $s_k$. Otherwise, $\omega$ contains $s_{k+1}$ or $s_ks_{k-1}s_k=s_{k+2}$, then $sk(\omega)=s_{k+1}$ or $s_{k+2}$, which contradict $sk(\omega)=s_k$. Similarly, in case 2, 3, 4, 5, $\omega$ must be the factor of $\alpha^{-1}s_{k+1}\underline{s_k}s_{k+1}\alpha^{-1}$ with kernel $s_k$ too.
So, in all cases, $\mu_1$ is the suffix of $\alpha^{-1}s_{k+1}$ and $\mu_2$ is the prefix of $s_{k+1}\alpha^{-1}$. Both of them are constant words throughout the five cases.
\end{proof}

By the proof of Lemma 3.3 and Proposition 1.5, we have the corollary below.

\begin{corollary}
Let $s_k$ be the singular word of order $k$, $\theta_k:=\alpha^{-1}s_{k+1}\underline{s_k}s_{k+1}\alpha^{-1}$.

(1) $sk(\theta_k)=s_k$;

(2) If $\tau\prec\theta_k$ with $sk(\tau)=s_k$, then $\tau$ appears in $\theta_k$ only once;

(3) Let $\omega$ be a factor with singular kernel $s_k$, then $\omega\prec\theta_k$, i.e.,
$$\{\omega\prec F_\infty|\ sk(\omega)=s_k\}=\{\omega\prec F_\infty|\ \omega\prec\theta_k,\ sk(\omega)=s_k\}.$$

\end{corollary}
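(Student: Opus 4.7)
The plan is to derive all three items from the analysis already present in the proof of Lemma 3.3 together with Proposition 1.5, in the order (3), (1), (2). Part (3) is essentially a restatement of what is proved inside Lemma 3.3: for any factor $\omega\prec F_\infty$ with $sk(\omega)=s_k$, let $s_{k,q}$ be the unique occurrence of its kernel inside $\omega$ (Proposition 1.5), and walk through the five local patterns of singular neighbors around $s_{k,q}$ enumerated in that proof. In every case, extending $\omega$ beyond the window $\alpha^{-1}s_{k+1}s_{k,q}s_{k+1}\alpha^{-1}$ forces $\omega$ to contain a full $s_{k+1}$ or the compound $s_ks_{k-1}s_k=s_{k+2}$, either of which would raise the kernel above $s_k$. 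Hence $\omega$ sits inside this window, which as a word equals $\theta_k$.

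For Part (1) I first verify $\theta_k\prec F_\infty$. By Proposition 1.7(2) the gap sequence $\{\nu_{s_k,p}\}$ is itself a Fibonacci sequence over the alphabet $\{s_{k+1},s_{k-1}\}$, so the pattern $s_{k+1}s_{k+1}$ occurs; picking $q$ with $\nu_{s_k,q-1}=\nu_{s_k,q}=s_{k+1}$ embeds $\theta_k$ in $F_\infty$ centered at $s_{k,q}$. The central $s_k$ gives $sk(\theta_k)=s_m$ for some $m\geq k$, and it remains to rule out $m\geq k+1$ case by case. For $m\geq k+3$, length forbids it, since $|s_m|\geq f_{k+3}>f_{k+3}-2=|\theta_k|$. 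For $m=k+1$, the positively separate property of $s_{k+1}$ places consecutive occurrences at distance at least $f_{k+2}$ apart, so the only candidates inside the embedded window are the two flanking $s_{k+1}$'s, each truncated by one letter by the outer $\alpha^{-1}$'s. For $m=k+2$, length permits one copy; here I invoke Lemma 3.1(2), giving $s_{k+2}=s_ks_{k-1}s_k$, which would force two disjoint $s_k$-factors inside $\theta_k$, whereas the positively separate property at level $k$ places only the central $s_k$ inside the embedded window.

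For Part (2) I apply Proposition 1.5 twice: by Part (1) the word $\theta_k$ has $sk(\theta_k)=s_k$ and therefore contains $s_k$ exactly once, and likewise any $\tau\prec\theta_k$ with $sk(\tau)=s_k$ contains $s_k$ exactly once. Any occurrence of $\tau$ in $\theta_k$ must align the unique $s_k$-factor of $\tau$ with the unique $s_k$-factor of $\theta_k$, pinning down $\tau$'s position in $\theta_k$. The main obstacle is the case $m=k+2$ of Part (1): length alone is insufficient, and one has to combine the identity $s_{k+2}=s_ks_{k-1}s_k$ from Lemma 3.1(2) with the level-$k$ positively separate structure to bound the number of $s_k$-factors inside the window.
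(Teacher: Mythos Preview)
Your proposal is correct and follows essentially the same route as the paper, which simply records that the corollary follows from the proof of Lemma~3.3 together with Proposition~1.5. Your write-up is in fact more careful than the paper on Part~(1): the paper's proof of Lemma~3.3 asserts in passing that $\alpha^{-1}s_{k+1}\underline{s_k}s_{k+1}\alpha^{-1}$ has kernel $s_k$ without justification, whereas you explicitly embed $\theta_k$ in $F_\infty$ and use the positively separate property at levels $k$ and $k+1$ (plus the identity $s_{k+2}=s_ks_{k-1}s_k$) to rule out $s_{k+1}$ and $s_{k+2}$ as factors, which is exactly the missing detail.
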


\vspace{0.2cm}

\noindent\textbf{Theorem 2.1}(Decomposition of $\omega_p$ and $\nu_{\omega,p}$ by $sk(\omega)$).

\emph{Let $\omega_p\prec F_\infty$ and $sk(\omega)=s_k$. Both decompositions below are unique:}

\emph{(1) $\omega_p=\mu_1(\omega)\ast s_{k,p}\ast\mu_2(\omega)$;}

\emph{(2) $\nu_{\omega,p}=\mu^{-1}_2(\omega)\ast\nu_{s_k,p}\ast\mu^{-1}_1(\omega)$.}

\begin{proof}
The proof of the proposition will be easy by the following diagram.


\centerline{Fig. 3.1: The relation among $\{\omega_p\}$, $\{s_{k,p}\}$, $\{\nu_{\omega,p}\}$ and $\{\nu_{s_k,p}\}$.}
\end{proof}

\vspace{0.2cm}

\noindent\textbf{Theorem 2.2}(Gap and gap sequence).

\emph{(1) Any factor $\omega\prec F_{\infty}$ has exactly two distinct gaps $\nu_{\omega,1}$ and $\nu_{\omega,2}$;}

\emph{(2) The gap sequence $\{\nu_{\omega,p}\}_{p\ge 1}$ is the Fibonacci sequence.}

\vspace{0.2cm}

\begin{proof}
(1) Since $\nu_{\omega,p}=\mu^{-1}_2(\omega)\ast\nu_{s_k,p}\ast\mu^{-1}_1(\omega)$ and $\{\{\nu_{s_k,p}\}_{p\geq1}\}=\{\nu_{s_k,1},\nu_{s_k,2}\}$, so $\{\{\nu_{\omega,p}\}_{p\geq1}\}=\{\mu^{-1}_2(\omega)\ast\nu_{s_k,1}\ast\mu^{-1}_1(\omega),
\mu^{-1}_2(\omega)\ast\nu_{s_k,2}\ast\mu^{-1}_1(\omega)\}
=:\{\nu_{\omega,1},\nu_{\omega,2}\}$. That means, any factor $\omega$ has exactly two distinct gaps $\nu_{\omega,1}$ and $\nu_{\omega,2}$.

(2) Since $\nu_{\omega,p}=\mu^{-1}_2(\omega)\ast\nu_{s_k,p}\ast\mu^{-1}_1(\omega)$ and the gap sequence $\{\nu_{s_k,p}\}_{p\ge 1}$ is the Fibonacci sequence (see Proposition 1.7(2)), the gap sequence $\{\nu_{\omega,p}\}_{p\ge 1}=\{\mu^{-1}_2(\omega)\ast\nu_{s_k,p}\ast\mu^{-1}_1(\omega)\}_{p\ge 1}$ is the Fibonacci sequence.
\end{proof}


\vspace{0.5cm}

\stepcounter{section}

\noindent\textbf{\large{4.~Proof of Theorem 2.3}}

\vspace{0.4cm}

By Proposition 1.5, we know that each $\omega\prec F_\infty$ has a unique decomposition by its singular kernel: $\omega=\mu_1(\omega)\ast sk(\omega)\ast\mu_2(\omega)$.
We are going to prove Theorem 2.3. which determines the expressions of $sk(\omega)$, $\mu_1(\omega)$ and $\mu_2(\omega)$ for each $\omega$. To solve this problem, we divide the factors into six
types in Definition 1.6. Lemma 4.1 give the relations among these six types, where the notation "$\sqcup$" means pairwise disjoint union.

\begin{lemma} The six types are pairwise disjoint and their union is all factors of $F_\infty$, i.e.,

(1) $\{\omega\in F_\infty|~\exists~k,~s.t.~|\omega|=f_k\}=\emph{T1.1}\sqcup\emph{T1.2}\sqcup\emph{T1.3}$;

(2) $\{\omega\in F_\infty|~\exists~k,~s.t.~f_k<|\omega|<f_{k+1}\}=\emph{T2.1}\sqcup\emph{T2.2}\sqcup\emph{T2.3}$.

\end{lemma}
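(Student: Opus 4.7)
The plan is to handle the two claims separately. Disjointness will be essentially a bookkeeping exercise: since the Fibonacci numbers $(f_k)$ are strictly increasing, the length $|\omega|$ uniquely determines whether $|\omega|=f_k$ for some $k$ (placing $\omega$ in some T1.j) or $f_k<|\omega|<f_{k+1}$ for some $k$ (placing it in some T2.j), and this $k$ is unique. Within a fixed $k$, the three subtypes T*.1, T*.2, T*.3 are distinguished by whether $sk(\omega)$ equals $s_k$, $s_{k-1}$, or $s_{k-2}$, and Proposition 1.5(1) guarantees that $\omega$ has a single well-defined kernel, so it cannot lie in two subtypes simultaneously.

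For the union claim, I would fix any $\omega\prec F_\infty$ with $f_k\leq|\omega|<f_{k+1}$, set $sk(\omega)=s_m$, and show $m\in\{k-2,k-1,k\}$. The upper bound $m\leq k$ is immediate, since $s_m\prec\omega$ forces $f_m=|s_m|\leq|\omega|<f_{k+1}$. For the lower bound $m\geq k-2$, I would invoke Corollary 3.4(3) to localize $\omega$ as a factor of $\theta_m=\alpha^{-1}s_{m+1}s_ms_{m+1}\alpha^{-1}$, and compute
\[
|\theta_m|=(f_{m+1}-1)+f_m+(f_{m+1}-1)=2f_{m+1}+f_m-2=f_{m+3}-2,
\]
using the Fibonacci identity $2f_{m+1}+f_m=f_{m+1}+f_{m+2}=f_{m+3}$. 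Thus $|\omega|\leq f_{m+3}-2$; if one had $m\leq k-3$, then $|\omega|\leq f_k-2<f_k$, contradicting $|\omega|\geq f_k$. Hence $m\geq k-2$, and combined with $m\leq k$ we get $m\in\{k-2,k-1,k\}$, so $\omega$ lies in exactly one of the six types.

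The main obstacle is the lower bound $m\geq k-2$: it is not at all transparent that a factor of length $\geq f_k$ must harbor a singular word of index as large as $k-2$. In particular one cannot hope to argue by claiming $s_{k-2}\prec\omega$ directly, since for instance $ababa\prec F_\infty$ has length $f_3=5$ but avoids $s_1=aa$ altogether (its kernel is $s_2$, which is $s_{k-1}$ rather than $s_{k-2}$). The leverage comes from inverting the question: rather than trying to produce a large singular word inside $\omega$, one uses the uniqueness of the singular kernel to trap $\omega$ inside the explicit window $\theta_m$ whose length grows only as $f_{m+3}$. This window bound is precisely the content of Corollary 3.4(3), and once it is in hand the lemma reduces to a one-line Fibonacci inequality.
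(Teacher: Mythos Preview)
Your proposal is correct and follows essentially the same route as the paper: both arguments use Corollary 3.4(3) to trap $\omega$ inside $\theta_m$, compute $|\theta_m|=2f_{m+1}+f_m-2=f_{m+3}-2$, and derive the contradiction $|\omega|\leq f_k-2$ when $m\leq k-3$; disjointness in both cases comes from Proposition 1.5(1). Your version is slightly tidier in that it handles the full range $f_k\leq|\omega|<f_{k+1}$ at once and dispatches all $m\leq k-3$ simultaneously, whereas the paper treats the two length regimes separately and checks $m=k-3$ explicitly before waving at smaller indices, but the substance is identical.
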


\begin{proof}

(1) Since $|sk(\omega)|\leq|\omega|$, $sk(\omega)$ can not be singular word $s_j$ with $j>k$. So $sk(\omega)$ can only be $s_j$ with $j\leq k$.
On the other hand, assume $sk(\omega)=s_{k-3}$, then by Corollary 3.4(3), $\omega\prec\beta^{-1}s_{k-2}\underline{s_{k-3}}s_{k-2}\beta^{-1}$ with kernel $s_{k-3}$ . But
$$|\beta^{-1}s_{k-2}s_{k-3}s_{k-2}\beta^{-1}|=2\ast f_{k-2}+f_{k-3}-2=f_k-2<f_k=|\omega|,$$
so $sk(\omega)$ can not be singular word $s_{k-3}$. By an analogous argument, $sk(\omega)$ can not take singular word $s_j$ for $-1\leq j\leq k-4$ too.
That is, $sk(\omega)$ has only three possible cases: $s_k$, $s_{k-1}$, $s_{k-2}$, and by the definitions of T1.1, T1.2 and T1.3, we get
$$\{\omega\in F_\infty|~|\omega|=f_k\}=\textrm{T1.1}\cup\textrm{T1.2}\cup\textrm{T1.3}.$$
Furthermore by Proposition 1.5(1), $\omega$ has a unique singular kernel, so T1.1, T1.2 and T1.3 are pairwise disjoint.

(2) Since $|\omega|<f_{k+1}$ and $|sk(\omega)|\leq|\omega|$, $sk(\omega)$ can not be singular word $s_j$ for $j>k$.
As well as the discussion in the case (1), $sk(\omega)$ can not take singular word $s_j(-1\le j\le k-3)$,
and may take only three cases: $s_k$, $s_{k-1}$, $s_{k-2}$. That is,
$$\{\omega\in F_\infty|~|\omega|=f_k\}=\textrm{T2.1}\cup\textrm{T2.2}\cup\textrm{T2.3}.$$
Moreover, from Proposition 1.5(1), $\omega$ has a unique singular word, so T2.1, T2.2 and T2.3 are pairwise disjoint.
\end{proof}

\vspace{0.2cm}

\noindent\textbf{Theorem 2.3} (Decomposition of $\omega$ by $sk(\omega)$).

\vspace{-0.6cm}
\begin{equation*}
\omega\in\textrm{T1.1}\Rightarrow \mu_1(\omega)=\mu_2(\omega)=\varepsilon.
\end{equation*}

\vspace{-0.6cm}
\begin{equation*}
\omega\in\textrm{T1.2}\Rightarrow\begin{cases}
\mu_1(\omega)=s_{k-2}[i-f_{k-1}+2,f_{k-2}],\\
\mu_2(\omega)=s_{k-2}[1,i-f_{k-1}+1],~~f_{k-1}-1\leq i\leq f_k-1.
\end{cases}
\end{equation*}

\vspace{-0.6cm}
\begin{equation*}
\omega\in\textrm{T1.3}\Rightarrow\begin{cases}
\mu_1(\omega)=s_{k-1}[i+2,f_{k-1}],\\
\mu_2(\omega)=s_{k-1}[1,i+1],~~0\leq i\leq f_{k-1}-2.
\end{cases}
\end{equation*}

\vspace{-0.6cm}
\begin{equation*}
\omega\in\textrm{T2.1}\Rightarrow\begin{cases}
\mu_1(\omega)=s_{k-1}[f_{k-1}-i+1,f_{k-1}],\\
\mu_2(\omega)=s_{k-1}[1,n-f_k-i],~~0\leq i\leq n-f_k.
\end{cases}
\end{equation*}

\vspace{-0.6cm}
\begin{equation*}
\omega\in\textrm{T2.2}\Rightarrow\begin{cases}
\mu_1(\omega)=s_k[f_k-i+1,f_k],\\
\mu_2(\omega)=s_k[1,n-f_{k-1}-i],~~0\leq i\leq n-f_{k-1}.
\end{cases}
\end{equation*}

\vspace{-0.6cm}
\begin{equation*}
\omega\in\textrm{T2.3}\Rightarrow\begin{cases}
\mu_1(\omega)=s_{k-1}[f_{k+1}-n-i,f_{k-1}],\\
\mu_2(\omega)=s_{k-1}[1,f_{k-1}-i-1],~~0\leq i\leq f_{k+1}-n-2.
\end{cases}
\end{equation*}

\begin{proof}
(1) If $\omega\in$T1.1, then $|\omega|=f_k$ and $sk(\omega)=s_k$. Notice that $|s_k|=f_k$, we get $|\mu_1|=|\mu_2|=0$,
i.e., $\mu_1(\omega)=\mu_2(\omega)=\varepsilon$. We have therefore in this case, $\omega=s_k$.

(2) If $\omega\in$T1.2, then $|\omega|=f_k$ and $sk(\omega)=s_{k-1}$. By Corollary 3.4,
$\omega\prec\beta^{-1}s_k\underline{s_{k-1}}s_k\beta^{-1}$, and by Lemma 3.1(2),
$$\beta^{-1}s_k\underline{s_{k-1}}s_k\beta^{-1}=\beta^{-1}s_{k-2}s_{k-3}s_{k-2}\underline{s_{k-1}}s_{k-2}s_{k-3}s_{k-2}\beta^{-1}.$$
Since $|sk(\omega)|=f_{k-1}$, $|\mu_1|+|\mu_2|=f_k-f_{k-1}=f_{k-2}$, which means $\mu_1$ is suffix of $s_{k-2}$ and $\mu_2$ is prefix of $s_{k-2}$.
So $\omega\prec s_{k-2}\underline{s_{k-1}}s_{k-2}$.
We get therefore, $\mu_1=s_{k-2}[i-f_{k-1}+2,f_{k-2}]$, $\mu_2=s_{k-2}[1,i-f_{k-1}+1]$
and $\omega=s_{k-2}[i-f_{k-1}+2,f_{k-2}]s_{k-1}s_{k-2}[1,i-f_{k-1}+1]$, where $f_{k-1}-1\leq i\leq f_k-1$.

(3) If $\omega\in$T1.3, then $|\omega|=f_k$ and $sk(\omega)=s_{k-2}$. By corollary 3.4,
$\omega\prec\alpha^{-1}s_{k-1}\underline{s_{k-2}}s_{k-1}\alpha^{-1}$.
In this case, $\mu_1=s_{k-1}[i+2,f_{k-1}]$, $\mu_2=s_{k-1}[1,i+1]$ and $\omega=s_{k-1}[i+2,f_{k-1}]s_{k-2}s_{k-1}[1,i+1]$, where $0\leq i\leq f_{k-1}-2$.

(4) The conclusion for $\omega\in$T2.1 can be obtained by the same argument as in (2).
The conclusions for $\omega$ being in T2.2 or T2.3  can be obtained by the same argument as in (3).
\end{proof}

\noindent\textbf{Remark.} By Theorem 2.3 and Corollary 3.4(2) the cardinality of each type are:

$\sharp$T1.1$=1$, $\sharp$T1.2$=f_{k-2}+1$, $\sharp$T1.3$=f_{k-1}-1$;

$\sharp$T2.1$=n-f_k+1$, $\sharp$T2.2$=n-f_{k-1}+1$, $\sharp$T2.3$=f_{k+1}-n-1$.

Let $\rho(n)$ is the complexity function of Fibonacci sequence which is defined by the cardinality of the set of the factors with length $n$, then above formulas give
immediately the known result $\rho(n)=n+1$.

\begin{corollary}\

(1) $\omega\in$\emph{T1.2}$\Leftrightarrow\omega=C_i(F_k)$, where $f_{k-1}-1\leq i\leq f_k-1$.

(2) $\omega\in$\emph{T1.3}$\Leftrightarrow\omega=C_i(F_k)$, where $0\leq i\leq f_{k-1}-2$.
\end{corollary}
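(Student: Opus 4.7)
The plan is to begin from the explicit decomposition in Theorem 2.3, simplify the ambient word containing $\omega$ using Lemma 3.1(1) and the identity $F_{k-1}F_{k-2}=F_k$, and then recognize that ambient word as a substring of $F_kF_k$. Since the length-$f_k$ factors of $F_kF_k$ starting at position $m$ are exactly the cyclic conjugates $C_{m-1}(F_k)$, the parameter $i$ from Theorem 2.3 will be read off directly as the conjugation index.

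For part (1), Theorem 2.3 combined with Corollary 3.4(3) shows that every $\omega\in\textrm{T1.2}$ is the length-$f_k$ factor of $s_{k-2}s_{k-1}s_{k-2}$ beginning at position $j=i-f_{k-1}+2$, for some $i\in[f_{k-1}-1,f_k-1]$. Expanding via Lemma 3.1(1) and $F_{k-1}F_{k-2}=F_k$, the ambient word simplifies to $s_{k-2}s_{k-1}s_{k-2}=\beta F_{k-2}F_k\alpha^{-1}$. Since $F_{k-1}$ is a prefix of $F_k$, with $F_k[f_{k-1}]=\beta$ and $F_k[f_{k-1}+1,f_k]=F_{k-2}$, this word equals $F_kF_k[f_{k-1},2f_k-1]$. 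Thus $\omega$ sits at position $f_{k-1}+j-1=i+1$ of $F_kF_k$, which is exactly $C_i(F_k)$. Primitivity of $F_k$ makes the conjugates pairwise distinct, so $i\mapsto C_i(F_k)$ is a bijection between $\textrm{T1.2}$ and $\{C_i(F_k):f_{k-1}-1\leq i\leq f_k-1\}$.

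For part (2), the same program applied to $\textrm{T1.3}$ gives, via Lemma 3.1(1), the identity $\alpha^{-1}s_{k-1}s_{k-2}s_{k-1}\alpha^{-1}=F_kF_{k-1}[1,f_{k-1}-2]$, which equals $F_kF_k[1,f_k+f_{k-1}-2]$ because $F_{k-1}$ is a prefix of $F_k$. Theorem 2.3 places $\omega$ at position $i+1$ of this prefix of $F_kF_k$ (after the $\alpha^{-1}$ truncation consumes the first letter of the leading $s_{k-1}$), so $\omega=C_i(F_k)$ for the same $i$. As $i$ ranges over $[0,f_{k-1}-2]$, this gives the remaining conjugates, yielding a bijection between $\textrm{T1.3}$ and $\{C_i(F_k):0\leq i\leq f_{k-1}-2\}$.

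The main obstacle is the bookkeeping of $\alpha$ and $\beta$ when expanding $s_{k-2},s_{k-1}$ and cancelling against the last letters of $F_{k-1},F_{k-2}$, since these flip with the parity of $k$; Lemma 3.1(1) encodes this uniformly, after which everything reduces to mechanical position arithmetic. No further combinatorial input is needed beyond the identification of the two ambient words as explicit substrings of $F_kF_k$.
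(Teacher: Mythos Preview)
Your proof is correct and follows essentially the same route as the paper: both start from the Theorem~2.3 decomposition and use Lemma~3.1(1) to relate $s_{k-1}s_{k-2}$ to $F_k$. The only difference is presentational---the paper recognizes $\omega$ directly as $C_{i-f_{k-1}+1}(s_{k-2}s_{k-1})$ and then shifts the conjugation index, whereas you embed the ambient word $s_{k-2}s_{k-1}s_{k-2}$ (resp.\ $\alpha^{-1}s_{k-1}s_{k-2}s_{k-1}\alpha^{-1}$) as an explicit substring of $F_kF_k$ and read off the position; both computations are equivalent.
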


\begin{proof}
(1) By Theorem 2.3, $\omega\in\text{T1.2}\Leftrightarrow \omega=s_{k-2}[i-f_{k-1}+2,f_{k-2}]s_{k-1}s_{k-2}[1,i-f_{k-1}+1].$
By the definition of conjugate word,
$$\omega=C_{i-f_{k-1}+1}(s_{k-2}s_{k-1})=C_{i-f_{k-1}+1}(s_{k-2}\alpha\alpha^{-1}s_{k-1})=C_i(\alpha^{-1}s_{k-1}s_{k-2}\alpha)=C_i(F_k),$$
where $f_{k-1}-1\leq i\leq f_k-1$.

(2) In this case, $\omega\in\text{T1.3}\Leftrightarrow \omega=s_{k-1}[i+2,f_{k-1}]s_{k-2}s_{k-1}[1,i+1]$, and
$$\omega=s_{k-1}[i+2,f_{k-1}]s_{k-2}\alpha\alpha^{-1}s_{k-1}[1,i+1]=C_i(s_{k-1}[2,f_{k-1}]s_{k-2}s_{k-1}[1,1])=C_i(F_k),$$
where $0\leq i\leq f_{k-1}-2$.
\end{proof}


\vspace{0.5cm}

\stepcounter{section}

\noindent\textbf{\large{5.~Proof of Theorem 2.4}}

\vspace{0.4cm}
This section is devoted to the proof of Theorem 2.4 which gives explicitly the expressions of all gaps for each factors $\omega$.
By Theorem 2.2, we only need to determine the expressions of gaps $\nu_{\omega,1}$ and $\nu_{\omega,2}$.

Suppose $|\omega|=n$ with $f_k\leq n<f_{k+1}$ for some $k$. We divide the proof of Theorem 2.4 into six parts, i.e.,
Theorem 2.4(1) to Theorem 2.4(6) according to the six types in Theorem 2.3.
By Theorem 2.1, $\nu_{\omega,p}=\mu_2^{-1}\nu_{sk(\omega),p}\mu_1^{-1}$ and $\nu^{-1}_{\omega,p}=\mu_1\nu^{-1}_{sk(\omega),p}\mu_2$. By Proposition 1.7(2), $\nu_{s_k,1}=s_{k+1}$ and $\nu_{s_k,2}=s_{k-1}$.

\vspace{0.2cm}

\noindent\textbf{Theorem 2.4(1)}

\vspace{-0.6cm}
\begin{equation*}
\omega\in\textrm{T1.1}\Rightarrow\begin{cases}
\nu_{\omega,1}=s_{k+1},&|\nu_{\omega,1}|=f_{k+1}>0,\\
\nu_{\omega,2}=s_{k-1},&|\nu_{\omega,2}|=f_{k-1}>0.
\end{cases}
\end{equation*}

\begin{proof} Since $\omega$ is in T1.1, $\omega=s_k$ by Theorem 2.3.
So $\nu_{\omega,1}=s_{k+1}$ and $\nu_{\omega,2}=s_{k-1}$.
\end{proof}

\vspace{0.2cm}

\noindent\textbf{Theorem 2.4(2)}

\vspace{-0.6cm}
\begin{equation*}
\omega\in\textrm{T1.2}\Rightarrow\begin{cases}
\nu_{\omega,1}=C_{i-f_{k-1}}(F_{k-1}),&|\nu_{\omega,1}|=f_{k-1}>0,\\
\nu_{\omega,2}=\varepsilon,&|\nu_{\omega,2}|=0.
\end{cases}
\end{equation*}

\begin{proof} Let $\omega\in$T1.2, by Theorem 2.3,
$$\omega=s_{k-2}[i-f_{k-1}+2,f_{k-2}]\underline{s_{k-1}}s_{k-2}[1,i-f_{k-1}+1],\ \ f_{k-1}-1\leq i\leq f_k-1,$$
so $\mu_1=s_{k-2}[i-f_{k-1}+2,f_{k-2}]$ and $\mu_2=s_{k-2}[1,i-f_{k-1}+1]$.

(1) Since $\nu_{s_{k-1},1}=s_k$:
\begin{equation*}
\begin{split}
&\nu_{\omega,1}=\mu_2^{-1}\nu_{s_{k-1},1}\mu_1^{-1}=s^{-1}_{k-2}[1,i-f_{k-1}+1]s_ks_{k-2}^{-1}[i-f_{k-1}+2,f_{k-2}]\\
=&s^{-1}_{k-2}[1,i-f_{k-1}+1](s_{k-2}s_{k-3}s_{k-2})s_{k-2}^{-1}[i-f_{k-1}+2,f_{k-2}]\\
=&s_{k-2}[i-f_{k-1}+2,f_{k-2}]s_{k-3}s_{k-2}[1,i-f_{k-1}+1]=C_{i-f_{k-1}+1}(s_{k-2}s_{k-3})=C_{i-f_{k-1}}(F_{k-1}),
\end{split}
\end{equation*}
which yields that $\nu_{\omega,1}=C_{i-f_{k-1}}(F_{k-1})$, $|\nu_{\omega,1}|=f_{k-1}>0$.

(2) Since $\nu_{s_{k-1},2}=s_{k-2}$:
\begin{equation*}
\begin{split}
&\nu_{\omega,2}=\mu_2^{-1}\nu_{s_{k-1},2}\mu_1^{-1}=s^{-1}_{k-2}[1,i-f_{k-1}+1]s_{k-2}s_{k-2}^{-1}[i-f_{k-1}+2,f_{k-2}]=\varepsilon,
\end{split}
\end{equation*}
so $\nu_{\omega,2}=\varepsilon$, $|\nu_{\omega,2}|=0$.
\end{proof}

\noindent\textbf{Example.} Taking $\omega=baabaaba\in$T1.2, it appears in $F_\infty$ as:

$F_\infty=abaaba(baabaaba)baaba(baabaaba)(baabaaba)baaba(baabaaba)baaba(baabaaba)\cdots$

Theorem 2.4 gives the expressions of gaps, where $n=8$, $k=4$, $i=6$:
\begin{equation*}
\begin{cases}
\nu_{\omega,1}=C_{i-f_{k-1}}(F_{k-1})=C_1(F_3)=baaba,&|\nu_{\omega,1}|=f_3=5>0,\\
\nu_{\omega,2}=\varepsilon,&|\nu_{\omega,2}|=0.
\end{cases}
\end{equation*}

\vspace{0.2cm}

\noindent\textbf{Theorem 2.4(3)}

\vspace{-0.6cm}
\begin{equation*}
\omega\in\textrm{T1.3}\Rightarrow\begin{cases}
\nu_{\omega,1}=\varepsilon,&|\nu_{\omega,1}|=0,\\
\nu_{\omega,2}^{-1}=C_i(F_{k-2}),&|\nu_{\omega,2}|=-f_{k-2}<0.
\end{cases}
\end{equation*}

\begin{proof} Since $\omega$ is in T1.3, by Theorem 2.3,
$$\omega=s_{k-1}[i+2,f_{k-1}]\underline{s_{k-2}}s_{k-1}[1,i+1],$$
where $0\leq i\leq f_{k-1}-2$, $\mu_1=s_{k-1}[i+2,f_{k-1}]$ and $\mu_2=s_{k-1}[1,i+1]$.

(1) Since $\nu_{s_{k-2},1}=s_{k-1}$:
\begin{equation*}
\begin{split}
&\nu_{\omega,1}=\mu_2^{-1}\nu_{s_{k-2},1}\mu_1^{-1}=s^{-1}_{k-1}[1,i+1]s_{k-1}s^{-1}_{k-1}[i+2,f_{k-1}]=\varepsilon,
\end{split}
\end{equation*}
i.e., $\nu_{\omega,1}=\varepsilon$, $|\nu_{\omega,1}|=0$.

(2) Since $\nu_{s_{k-2},1}=s_{k-3}$:
\begin{equation*}
\begin{split}
&\nu^{-1}_{\omega,2}=\mu_1\nu^{-1}_{s_{k-2},2}\mu_2=s_{k-1}[i+2,f_{k-1}]s^{-1}_{k-3}s_{k-1}[1,i+1].
\end{split}
\end{equation*}

We are going to determine the expression of $\nu^{-1}_{\omega,2}$ which we divide into three different cases.

(2.1) If $s_{k-3}$ is the suffix of $s_{k-1}[i+2,f_{k-1}]$, then
$$|s_{k-1}[i+2,f_{k-1}]|=f_{k-1}-i-1\geq f_{k-3},$$
i.e., $i\leq f_{k-2}-1.$
\begin{equation*}
\begin{split}
&\nu^{-1}_{\omega,2}=(s_{k-3}s_{k-4}s_{k-3})[i+2,f_{k-1}]s^{-1}_{k-3}(s_{k-3}s_{k-4}s_{k-3})[1,i+1]\\
=&(s_{k-3}s_{k-4})[i+2,f_{k-1}](s_{k-3}s_{k-4})[1,i+1]=C_{i+1}(s_{k-3}s_{k-4})=C_i(F_{k-2}).
\end{split}
\end{equation*}

(2.2) If $s_{k-3}$ is the prefix of $s_{k-1}[1,i+1]$, then
$$|s_{k-1}[1,i+1]|=i+1\geq f_{k-3},$$
i.e., $i\geq f_{k-3}-1$.
\begin{equation*}
\begin{split}
&\nu^{-1}_{\omega,2}=(s_{k-3}s_{k-4}s_{k-3})[i+2,f_{k-1}]s^{-1}_{k-3}(s_{k-3}s_{k-4}s_{k-3})[1,i+1]\\
=&(s_{k-4}s_{k-3})[i-f_{k-3}+2,f_{k-2}](s_{k-4}s_{k-3})[1,i-f_{k-3}+1]\\
=&C_{i-f_{k-3}+1}(s_{k-4}s_{k-3})=C_i(F_{k-2}).
\end{split}
\end{equation*}

(2.3) If $s_{k-3}$ is neither the suffix of $s_{k-1}[i+2,f_{k-1}]$ nor the prefix of $s_{k-1}[1,i+1]$,
then $f_{k-2}-1<i<f_{k-3}-1$, we have thus $f_{k-2}<f_{k-3}$, which is obviously not true.
So the third case does not exist.

According to (2.1)-(2.3), we have $\nu_{\omega,2}^{-1}=C_i(F_{k-2})$, $|\nu_{\omega,2}|=-f_{k-2}<0$.
\end{proof}

\noindent\textbf{Example.}Taking $\omega=baababaa\in$T1.3, it appears in $F_\infty$ as:

$F_\infty=a(baababaa)(baaba(baa)babaa)(baababaa)(baaba(baa)babaa)(baaba(baa)babaa)(ba\cdots$\\
where $(baa)$ in $(baaba(baa)babaa)$ shows a overlap between two successive $\omega$.

Theorem 2.4 gives the expressions of gaps, where $n=8$, $k=4$, $i=1$:
\begin{equation*}
\begin{cases}
\nu_{\omega,1}=\varepsilon,&|\nu_{\omega,1}|=0,\\
\nu_{\omega,2}^{-1}=C_i(F_{k-2})=C_1(F_2)=baa,&|\nu_{\omega,2}|=-f_2=-3<0.
\end{cases}
\end{equation*}

\vspace{0.2cm}

\noindent\textbf{Theorem 2.4(4)}

\vspace{-0.6cm}
\begin{equation*}
\omega\in\textrm{T2.1}\Rightarrow\begin{cases}
\nu_{\omega,1}=s_{k+1}[n-f_k-i+1,f_{k+1}-i],&|\nu_{\omega,1}|=f_{k+2}-n>0,\\
\nu_{\omega,2}=s_{k-1}[n-f_k-i+1,f_{k-1}-i],&|\nu_{\omega,2}|=f_{k+1}-n>0.
\end{cases}
\end{equation*}

\begin{proof} Since $\omega$ is in T2.4, by Theorem 2.3,
$$\omega=s_{k-1}[f_{k-1}-i+1,f_{k-1}]\underline{s_k}s_{k-1}[1,n-f_k-i],$$
where $0\leq i\leq n-f_k$,
$\mu_1=s_{k-1}[f_{k-1}-i+1,f_{k-1}]$ and $\mu_2=s_{k-1}[1,n-f_k-i]$.

(1) Since $\nu_{s_k,1}=s_{k+1}$:
\begin{equation*}
\begin{split}
&\nu_{\omega,1}=\mu_2^{-1}\nu_{s_k,1}\mu_1^{-1}=s^{-1}_{k-1}[1,n-f_k-i]s_{k+1}s^{-1}_{k-1}[f_{k-1}-i+1,f_{k-1}]\\
=&s^{-1}_{k-1}[1,n-f_k-i]s_{k-1}s_{k-2}s_{k-1}s^{-1}_{k-1}[f_{k-1}-i+1,f_{k-1}]\\
=&s_{k-1}[n-f_k-i+1,f_{k-1}]s_{k-2}s_{k-1}[1,f_{k-1}-i]=s_{k+1}[n-f_k-i+1,f_{k+1}-i],
\end{split}
\end{equation*}
which yields $\nu_{\omega,1}=s_{k+1}[n-f_k-i+1,f_{k+1}-i]$, $|\nu_{\omega,1}|=f_{k+2}-n>0$.

(2) Since $\nu_{s_k,2}=s_{k-1}$:
\begin{equation*}
\begin{split}
&\nu_{\omega,2}=\mu_2^{-1}\nu_{s_k,2}\mu_1^{-1}=s^{-1}_{k-1}[1,n-f_k-i]s_{k-1}s^{-1}_{k-1}[f_{k-1}-i+1,f_{k-1}]\\
=&s_{k-1}[n-f_k-i+1,f_{k-1}-i],
\end{split}
\end{equation*}
i.e., $\nu_{\omega,2}=s_{k-1}[n-f_k-i+1,f_{k-1}-i]$, $|\nu_{\omega,2}|=f_{k+1}-n>0$.
\end{proof}

\noindent\textbf{Example.} Taking $\omega=ababaabab\in$T2.1, it appears in $F_\infty$ as:

$F_\infty=abaababaaba(ababaabab)aabaababaaba(ababaabab)aaba(ababaabab)aabaababaaba(\cdots$

Theorem 2.4 gives the expressions of gaps, where $n=9$, $k=4$, $i=1$:
\begin{equation*}
\begin{cases}
\nu_{\omega,1}=s_{k+1}[n-f_k-i+1,f_{k+1}-i]=s_5[1,12]=aabaababaaba,\\
|\nu_{\omega,1}|=f_{k+2}-n=f_6-9=12>0,\\
\nu_{\omega,2}=s_{k-1}[n-f_k-i+1,f_{k-1}-i]=s_3[1,4]=aaba,\\
|\nu_{\omega,2}|=f_{k+1}-n=f_5-9=4>0.
\end{cases}
\end{equation*}

\vspace{0.2cm}

\noindent\textbf{Theorem 2.4(5)}

\vspace{-0.6cm}
\begin{equation*}
\omega\in\textrm{T2.2}\Rightarrow\begin{cases}
\nu_{\omega,1}=s_k[n-f_{k-1}-i+1,f_k-i],&|\nu_{\omega,1}|=f_{k+1}-n>0,\\
\nu_{\omega,2}^{-1}=F_{k+1}[f_k-i,n-i-1],&|\nu_{\omega,2}|=f_k-n<0.
\end{cases}
\end{equation*}

\begin{proof} Since $\omega$ is in T2.2, by Theorem 2.3,
$$\omega=s_k[f_k-i+1,f_k]\underline{s_{k-1}}s_k[1,n-f_{k-1}-i],$$
where $0\leq i\leq n-f_{k-1}$,
$\mu_1=s_k[f_k-i+1,f_k]$ and $\mu_2=s_k[1,n-f_{k-1}-i]$.

(1) Since $\nu_{s_{k-1},1}=s_k$:
\begin{equation*}
\begin{split}
&\nu_{\omega,1}=\mu_2^{-1}\nu_{s_{k-1},1}\mu_1^{-1}=s^{-1}_k[1,n-f_{k-1}-i]s_ks^{-1}_k[f_k-i+1,f_k]\\
=&s_k[n-f_{k-1}-i+1,f_k-i],
\end{split}
\end{equation*}
i.e., $\nu_{\omega,1}=s_k[n-f_{k-1}-i+1,f_k-i]$, $|\nu_{\omega,1}|=f_{k+1}-n>0$.

(2) Since $\nu_{s_{k-1},2}=s_{k-2}$:
\begin{equation*}
\begin{split}
&\nu^{-1}_{\omega,2}=\mu_1\nu^{-1}_{s_{k-1},2}\mu_2=s_k[f_k-i+1,f_k]s^{-1}_{k-2}s_k[1,n-f_{k-1}-i].
\end{split}
\end{equation*}

We are going to determine the expression of $\nu^{-1}_{\omega,2}$, which we divide into three distinct cases.

(2.1) If $s_{k-2}$ is the suffix of $s_k[f_k-i+1,f_k]$, then
$$|s_k[f_k-i+1,f_k]|=i\geq f_{k-2}.$$
\begin{equation*}
\begin{split}
&\nu^{-1}_{\omega,2}=(s_{k-2}s_{k-3})[f_k-i+1,f_{k-1}]s_k[1,n-f_{k-1}-i]\\
=&(\alpha F_{k-2}F_{k-3}\beta^{-1})[f_k-i+1,f_{k-1}](\beta F_k\alpha^{-1})[1,n-f_{k-1}-i]\\
=&F_{k-1}[f_k-i,f_{k-1}]F_k[1,n-f_{k-1}-i-1]=(F_{k-1}F_k)[f_k-i,n-i-1]\\
=&(F_{k+1}\beta^{-1}\alpha^{-1}\beta\alpha)[f_k-i,n-i-1]=F_{k+1}[f_k-i,n-i-1].
\end{split}
\end{equation*}

(2.2) If $s_{k-2}$ is the prefix of $s_k[1,n-f_{k-1}-i]$, then
$$|s_k[1,n-f_{k-1}-i]|=n-f_{k-1}-i\geq f_{k-2},$$
i.e., $i\leq n-f_k$.
\begin{equation*}
\begin{split}
&\nu^{-1}_{\omega,2}=s_k[f_k-i+1,f_k](s_{k-3}s_{k-2})[1,n-f_k-i]\\
=&(\beta F_k\alpha^{-1})[f_k-i+1,f_k](\alpha F_{k-1}\beta^{-1}\alpha^{-1}\beta)[1,n-f_k-i]\\
=&F_k[f_k-i,f_k]F_{k-1}[1,n-f_k-i-1]=F_{k+1}[f_k-i,n-i-1].
\end{split}
\end{equation*}

(2.3) If $s_{k-2}$ is neither the suffix of $s_k[f_k-i+1,f_k]$, nor the prefix of $s_k[1,n-f_{k-1}-i]$,
then $n-f_k<i<f_{k-2}$, i.e., $n-f_k<f_{k-2}$, and $|s_k[f_k-i+1,f_k]|<f_{k-2}$, $|s_k[1,n-f_{k-1}-i]|<f_{k-2}$.
\begin{equation*}
\begin{split}
&\nu^{-1}_{\omega,2}=s_{k-2}[f_{k-2}-i+1,f_{k-2}]s^{-1}_{k-2}s_{k-2}[1,n-f_{k-1}-i]\\
=&s_{k-2}[f_{k-2}-i+1,f_{k-2}]s^{-1}_{k-2}[n-f_{k-1}-i+1,f_{k-2}]=s_{k-2}[f_{k-2}-i+1,n-f_{k-1}-i]\\
=&(s_{k-2}s_{k-3}s_{k-2})[f_{k-2}+f_{k-1}-i+1,n-f_{k-1}+f_{k-1}-i]\\
=&s_{k-1}[f_k-i+1,n-i]=F_{k+1}[f_k-i,n-i-1].
\end{split}
\end{equation*}

According to (2.1)-(2.3), we know $\nu_{\omega,2}^{-1}=F_{k+1}[f_k-i,n-i-1]$, $|\nu_{\omega,2}|=f_k-n<0$.
\end{proof}

\noindent\textbf{Example.} Taking $\omega=baabaabab\in$T2.2, it appears in $F_\infty$ as:

$F_\infty=abaaba(baabaabab)aaba(baabaaba(b)aabaabab)aaba(baabaabab)aaba(baabaaba(b)a\cdots$\\
where "$(b)$" in "$baabaaba(b)aabaabab$" shows a overlap.

Theorem 2.4 gives the expressions of gaps, where $n=9$, $k=4$, $i=1$:
\begin{equation*}
\begin{cases}
\nu_{\omega,1}=s_k[n-f_{k-1}-i+1,f_k-i]=s_4[4,7]=aaba,\\
|\nu_{\omega,1}|=f_{k+1}-n=13-9=4>0,\\
\nu_{\omega,2}^{-1}=F_{k+1}[f_k-i,n-i-1]=F_5[7,7]=b,\\
|\nu_{\omega,2}|=f_k-n=f_4-9=-1<0.
\end{cases}
\end{equation*}

\vspace{0.2cm}

\noindent\textbf{Theorem 2.4(6)}

\vspace{-0.6cm}
\begin{equation*}
\omega\in\textrm{T2.3}\Rightarrow\begin{cases}
\nu_{\omega,1}^{-1}=s_{k-1}[f_{k+1}-n-i,f_{k-1}-i-1],&|\nu_{\omega,1}|=f_k-n<0,\\
\nu_{\omega,2}^{-1}=F_k[f_{k+1}-n-i-1,f_k-i-2],&|\nu_{\omega,2}|=f_{k-1}-n<0.
\end{cases}
\end{equation*}

\begin{proof} Since $\omega$ is in T2.3, by Theorem 2.3,
$$\omega=s_{k-1}[f_{k+1}-n-i,f_{k-1}]\underline{s_{k-2}}s_{k-1}[1,f_{k-1}-i-1],$$
where $0\leq i\leq f_{k+1}-n-2$,
$\mu_1=s_{k-1}[f_{k+1}-n-i,f_{k-1}]$ and $\mu_2=s_{k-1}[1,f_{k-1}-i-1]$.

(1) Since $\nu_{s_{k-2},1}=s_{k-1}$:
\begin{equation*}
\begin{split}
&\nu^{-1}_{\omega,1}=\mu_1\nu^{-1}_{s_{k-2},1}\mu_2=s_{k-1}[f_{k+1}-n-i,f_{k-1}]s^{-1}_{k-1}s_{k-1}[1,f_{k-1}-i-1]\\
=&s_{k-1}[f_{k+1}-n-i,f_{k-1}]s^{-1}_{k-1}[f_{k-1}-i,f_{k-1}]=s_{k-1}[f_{k+1}-n-i,f_{k-1}-i-1],
\end{split}
\end{equation*}
i.e., $\nu_{\omega,1}^{-1}=s_{k-1}[f_{k+1}-n-i,f_{k-1}-i-1]$, $|\nu_{\omega,1}|=f_k-n<0$.

(2) Since $\nu_{s_{k-2},2}=s_{k-3}$:
\begin{equation*}
\begin{split}
&\nu^{-1}_{\omega,2}=\mu_1\nu^{-1}_{s_{k-2},2}\mu_2=s_{k-1}[f_{k+1}-n-i,f_{k-1}]s^{-1}_{k-3}s_{k-1}[1,f_{k-1}-i-1].
\end{split}
\end{equation*}

We are going to determine the expression of $\nu^{-1}_{\omega,2}$, which we divide into three distinct cases.

(2.1) If $s_{k-3}$ is the suffix of $s_{k-1}[f_{k+1}-n-i,f_{k-1}]$, then
$$|s_{k-1}[f_{k+1}-n-i,f_{k-1}]|=f_{k-1}-f_{k+1}+n+i+1\geq f_{k-3},$$
i.e., $i\geq f_k+f_{k-3}-n-1$.
\begin{equation*}
\begin{split}
&\nu^{-1}_{\omega,2}=(s_{k-3}s_{k-4})[f_{k+1}-n-i,f_{k-2}]s_{k-1}[1,f_{k-1}-i-1]\\
=&(\beta F_{k-3}F_{k-4}\alpha^{-1})[f_{k+1}-n-i,f_{k-2}](\alpha F_{k-1}\beta^{-1})[1,f_{k-1}-i-1]\\
=&F_{k-2}[f_{k+1}-n-i-1,f_{k-2}]F_{k-1}[1,f_{k-1}-i-1]=F_k[f_{k+1}-n-i-1,f_k-i-2].
\end{split}
\end{equation*}

(2.2) If $s_{k-3}$ is the prefix of $s_{k-1}[1,f_{k-1}-i-1]$, then
$$|s_{k-1}[1,f_{k-1}-i-1]|=f_{k-1}-i-1\geq f_{k-3},$$
i.e., $i\leq f_{k-2}-1$.
\begin{equation*}
\begin{split}
&\nu^{-1}_{\omega,2}=s_{k-1}[f_{k+1}-n-i,f_{k-1}](s_{k-4}s_{k-3})[1,f_{k-2}-i-1]\\
=&(\alpha F_{k-1}\beta^{-1})[f_{k+1}-n-i,f_{k-1}](\beta F_{k-4}F_{k-3}\beta^{-1})[1,f_{k-2}-i-1]\\
=&F_{k-1}[f_{k+1}-n-i,f_{k-1}]F_{k-2}[1,f_{k-2}-i-2]=F_k[f_{k+1}-n-i-1,f_k-i-2],
\end{split}
\end{equation*}
the 3-rd equality holds because $F_{k-4}F_{k-3}=F_{k-2}\alpha^{-1}\beta^{-1}\alpha\beta$.

(2.3) If $s_{k-3}$ is neither the suffix of $s_{k-1}[f_{k+1}-n-i,f_{k-1}]$, nor the prefix of $s_{k-1}[1,f_{k-1}-i-1]$.
Then $f_{k-2}-1<i<f_k+f_{k-3}-n-1$, i.e., $f_{k-2}-1<f_k+f_{k-3}-n-1$, so $n<f_k+f_{k-3}-f_{k-2}<f_k$, which is obviously not true. So this case does not exist.

According to (2.1)-(2.3), we have $\nu_{\omega,2}^{-1}=F_k[f_{k+1}-n-i-1,f_k-i-2]$, $|\nu_{\omega,2}|=f_{k-1}-n<0$.
\end{proof}

\noindent\textbf{Example.} Taking $\omega=baababaab\in$T2.3, it appears in $F_\infty$ as:

$F_\infty=a(baababaa(b)aaba(baab)abaa(b)aababaa(b)aaba(baab)abaa(b)aaba(baab)abaa(b)a\cdots$\\
where "$(b)$" in "$baababaa(b)aaba$" and "$(baab)$" in "$aaba(baab)abaa$" show two different overlaps.

Theorem 2.4 gives the expressions of gaps, where $n=9$, $k=4$, $i=1$:
\begin{equation*}
\begin{cases}
\nu_{\omega,1}^{-1}=s_{k-1}[f_{k+1}-n-i,f_{k-1}-i-1]=s_3[3,3]=b,\\
|\nu_{\omega,1}|=f_k-n=f_4-9=-1<0,\\
\nu_{\omega,2}^{-1}=F_k[f_{k+1}-n-i-1,f_k-i-2]=F_4[2,5]=baab,\\
|\nu_{\omega,2}|=f_{k-1}-n=f_3-9=-4<0.
\end{cases}
\end{equation*}


\vspace{0.5cm}

\stepcounter{section}

\noindent\textbf{\large{6.~Combinatorial Properties Of Factors}}

\vspace{0.4cm}

In this section, we will discuss some combinatorial properties of the factors of the Fibonacci sequence.
Let $F_\infty=x_1x_2\cdots x_n\cdots$
be the Fibonacci sequence and $\omega\prec F_\infty$ be a factor of $F_\infty$, as usual let $\{\omega_p\}_{p\ge 1}$ be the factor sequence where $\omega_p$ is $p$-th appearance of $\omega$. Notice that if we consider the location of $\omega_p\in F_\infty$, then the factors $\omega_p$ and $\omega_q(p\neq q)$ are distinct.
In fact, $\omega_p$ should be regarded as two variables $\omega$ and $p$, $\omega$ is the factor and $p$ indicates the location of $\omega$.

Let $\cal P$ be a property, we say $\omega\in{\cal P}$ if there exists an index $p\in\mathbb N$ such that $\omega_p\in{\cal P}$. But in this case, we do not know where is the location of $\omega_p\in F_\infty$, so we wish to determine the set $\{(\omega, p),\omega\prec F_\infty, p\in\mathbb{N}|~\omega_p\in{\cal P}\}.$

\vspace{0.2cm}

\noindent\textbf{Example.} we consider ${\cal P}$ is "property of square factor", that is, if $\omega\in{\cal P}$, then there exists $p$ such that $\omega_p\omega_{p+1}\in F_\infty$.
 Let $\omega=ab\in F_\infty$, then $\omega_1=F_\infty[1,2]$, $\omega_2=F_\infty[4,5]$, and we know that $\omega_1\not\in {\cal P}$, $\omega_2\in {\cal P}$, $\omega\in {\cal P}$.

\vspace{0.2cm}

By the example, we are led naturally to study combinatorial properties of factor $\omega$ of the following two types:

\vspace{0.2cm}

\noindent\textbf{Local Question: Determine all factors $\omega\in F_\infty$ such that $\omega\in {\cal P}$,
i.e., there exists $p\in\mathbb N$ such that $\omega_p\in{\cal P}.$}

\vspace{0.1cm}

\noindent\textbf{Global Question: Determine all factors $\omega\in F_\infty$ and all indices $p$ such that $\omega_p\in {\cal P}$.}

\vspace{0.2cm}

More precisely, define the spectrum of $\cal P$ by
$$\Lambda({\cal P}):=\Lambda({\cal P})(\omega,p):=\{(\omega, p),\omega\prec F_\infty, p\in\mathbb{N}|~\omega_p\in{\cal P}\}.$$
By the definition above, the Global question is equivalent to determine the spectrum of the property $\cal P$.

\vspace{0.2cm}

\noindent\textbf{Remark.} By the definition above, the spectrum of the property $\cal P$
depends two independent variables $\omega$ and $p$.
For a given factor $\omega$, the spectrum $\Lambda({\cal P})$ will give all indices $p$ such that $\omega_p\in {\cal P}$; and for a given index
$p$, the spectrum $\Lambda({\cal P})$ will give all factors $\omega$ such that $\omega_p\in {\cal P}$. The Local question is equivalent to determine the projection
of the spectrum $\Lambda({\cal P})$ on factor space.

\vspace{0.2cm}

We will study mainly some combinatorial properties such as "adjacent property, separated property and overlapped property of factors" for both questions.
From our knowledge, all previous studies on combinatorial over words concern with only local question,
in fact, "Global Question" is much more  difficult than "Local Question".

\vspace{0.2cm}

\noindent\textbf{Notation.} For the convenience for the discussions below, we give some notations.

(1)$\Gamma_a=\{p\prec\mathbb{N}|~F_\infty[p]=a\}$; (2)$\Gamma_b=\{p\prec\mathbb{N}|~F_\infty[p]=b\}$;

(3)$\Gamma_{aa}=\{p\prec\mathbb{N}|~F_\infty[p]=a,F_\infty[p+1]=a\}$; (4)$\Gamma_{ab}=\{p\prec\mathbb{N}|~F_\infty[p]=a,F_\infty[p+1]=b\}$.

It is easy to see that $\Gamma_a\sqcup\Gamma_b=\mathbb{N}$ and $\Gamma_{aa}\sqcup\Gamma_{aa}=\Gamma_a$.

\vspace{0.2cm}

\noindent\textbf{Remark.} It is known that $F_\infty[p]$ can be expressed explicitly by the following formula:
$F_\infty[p]=a$ if $[(p+1)\xi]-[p\xi]=0$, $F_\infty[p]=b$ if $[(p+1)\xi]-[p\xi]=1$, where $\xi=\frac{3-\sqrt{5}}{2}$.
So the sets $\Gamma_a$ and $\Gamma_b$ can be define easily.

\begin{lemma} Let $\omega\in F_\infty$, then
$\nu_{\omega,p}=\nu_{\omega,1}\Leftrightarrow p\in\Gamma_a,~ \nu_{\omega,p}=\nu_{\omega,2}\Leftrightarrow p\in\Gamma_b.$
\end{lemma}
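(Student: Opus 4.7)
My plan is to reduce the general statement to the case of singular words via Theorem 2.1, and then read off the conclusion from Proposition 1.7(2), which already describes the gap sequence of $s_k$ as a Fibonacci sequence over $\{s_{k+1},s_{k-1}\}$.

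First, let $sk(\omega)=s_k$. Theorem 2.1(2) gives $\nu_{\omega,p}=\mu_2^{-1}(\omega)\,\nu_{s_k,p}\,\mu_1^{-1}(\omega)$, and the words $\mu_1(\omega),\mu_2(\omega)$ depend only on $\omega$, not on $p$. Consequently $\nu_{\omega,p}=\nu_{\omega,1}$ if and only if $\nu_{s_k,p}=\nu_{s_k,1}$, and likewise for the value $\nu_{\omega,2}$. So it suffices to prove the lemma in the special case $\omega=s_k$.

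Second, by Proposition 1.7(2) the sequence $\{\nu_{s_k,p}\}_{p\ge 1}$, viewed as an infinite word over the two-letter alphabet $\{\nu_{s_k,1},\nu_{s_k,2}\}=\{s_{k+1},s_{k-1}\}$, is the Fibonacci sequence. Matching first letters (the gap sequence begins with $\nu_{s_k,1}=s_{k+1}$, while $F_\infty$ begins with $a$), the identification must be $s_{k+1}\leftrightarrow a$ and $s_{k-1}\leftrightarrow b$. Hence $\nu_{s_k,p}=\nu_{s_k,1}$ exactly when $F_\infty[p]=a$, i.e.\ $p\in\Gamma_a$, and $\nu_{s_k,p}=\nu_{s_k,2}$ exactly when $p\in\Gamma_b$. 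Combined with the reduction above, this gives the lemma.

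The only point that needs a moment's bookkeeping is that the identification in Proposition 1.7(2) is the first-letter-to-first-letter one rather than its swap; this is directly visible from the positively separate decomposition displayed there, and it is illustrated by the explicit $s_2$-example following Proposition 1.7, where the first gap is $\nu_{s_2,1}=s_3$ while $F_\infty$ starts with $a$. So no genuine obstacle arises: the substantive content has been supplied by Theorem 2.1 and Proposition 1.7(2), and the lemma is essentially a transcription.
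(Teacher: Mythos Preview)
Your proof is correct and follows essentially the same approach as the paper: the paper cites Theorem~2.2 directly (the gap sequence $\{\nu_{\omega,p}\}$ is Fibonacci with $\nu_{\omega,1}\leftrightarrow a$, $\nu_{\omega,2}\leftrightarrow b$), while you unpack that citation one level to Theorem~2.1(2) plus Proposition~1.7(2), which is exactly how Theorem~2.2 was proved. The substance and the first-letter-matching argument are identical.
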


\begin{proof}
By Theorem 2.2, the gap sequence $\{\nu_{\omega,p}\}_{p\geq1}$ is Fibonacci sequence, in which $\nu_{\omega,1}$ and $\nu_{\omega,2}$ correspond with letter
$a$ and $b$ respectively. Thus $\nu_{\omega,p}=\nu_{\omega,1}\Leftrightarrow F_\infty[p]=a$ and $\nu_{\omega,p}=\nu_{\omega,2}\Leftrightarrow F_\infty[p]=b$.
\end{proof}

\begin{definition}[Power Property]
Let $\omega\in F_\infty$. We say that $\omega\in{\cal P}_i(i\ge 1)$ if there exists $p$ such that
$\omega_p\cdots\omega_{p+i}\prec F_\infty$.
\end{definition}

\begin{proposition} [Global property for Power Property]\

(1) $\Lambda({\cal P}_1)=(\emph{T}1.2,\Gamma_b)\sqcup(\emph{T}1.3,\Gamma_a)$;

(2) $\Lambda({\cal P}_2)=(\emph{T}1.3,\Gamma_{aa})$;

(3) $\Lambda({\cal P}_2)\setminus\Lambda({\cal P}_1)=(\emph{T}1.2,\Gamma_b)\sqcup(\emph{T}1.3,\Gamma_{ab})$;

(4) $\Lambda({\cal P}_i)=\emptyset$, $i\geq3$.
\end{proposition}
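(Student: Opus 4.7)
The plan is to translate the power condition $\omega_p\cdots\omega_{p+i}\prec F_\infty$ into the statement that $\nu_{\omega,p+j}=\varepsilon$ for $j=0,\ldots,i-1$, i.e.\ that $i+1$ consecutive occurrences of $\omega$ are pairwise adjacent with no gap (this equivalence is exactly the one illustrated by the $\omega_1\notin{\cal P}$, $\omega_2\in{\cal P}$ example at the start of the section: separation or overlap prevents the concatenation from landing as a factor at the position of $\omega_p$, and the only way a length-$(i+1)|\omega|$ word starting at $\omega_p$ can read off $F_\infty$ is adjacency). Once this reduction is made, the result follows almost mechanically from Theorem 2.4, Lemma 6.1, and the standard combinatorial properties of $F_\infty$.

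Step one is to determine from Theorem 2.4 which types admit an empty gap. A direct inspection of the six cases shows that the only empty gaps are $\nu_{\omega,2}=\varepsilon$ for $\omega\in\emph{T}1.2$ and $\nu_{\omega,1}=\varepsilon$ for $\omega\in\emph{T}1.3$; in $\emph{T}1.1$ both gaps equal $s_{k\pm1}\neq\varepsilon$, and in each of $\emph{T}2.1,\emph{T}2.2,\emph{T}2.3$ the two gap lengths are strictly nonzero (positive separation or nontrivial overlap). Combined with Lemma 6.1, which identifies $\nu_{\omega,p}=\nu_{\omega,1}$ with $p\in\Gamma_a$ and $\nu_{\omega,p}=\nu_{\omega,2}$ with $p\in\Gamma_b$, the condition $\nu_{\omega,p}=\varepsilon$ becomes: $\omega\in\emph{T}1.2$ with $p\in\Gamma_b$, or $\omega\in\emph{T}1.3$ with $p\in\Gamma_a$. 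This yields (1).

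For (2), requiring $\nu_{\omega,p}=\nu_{\omega,p+1}=\varepsilon$ forces two consecutive indices $p,p+1$ to both select the unique empty-gap letter of the type. For $\emph{T}1.2$ this would demand $bb\prec F_\infty$, which is excluded, whereas for $\emph{T}1.3$ it requires $p,p+1\in\Gamma_a$, i.e.\ $p\in\Gamma_{aa}$; hence $\Lambda({\cal P}_2)=(\emph{T}1.3,\Gamma_{aa})$. Statement (3) then follows from the inclusion $\Lambda({\cal P}_2)\subset\Lambda({\cal P}_1)$ (three adjacent occurrences yield two), together with the disjoint decomposition $\Gamma_a=\Gamma_{aa}\sqcup\Gamma_{ab}$: the elements of $\Lambda({\cal P}_1)$ not in $\Lambda({\cal P}_2)$ are exactly $(\emph{T}1.2,\Gamma_b)\sqcup(\emph{T}1.3,\Gamma_{ab})$. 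For (4), $i\ge3$ would require three consecutive empty gaps, which amounts to $aaa\prec F_\infty$ in the $\emph{T}1.3$ case or $bbb\prec F_\infty$ in the $\emph{T}1.2$ case; both are impossible by the classical avoidance of $bb$ and of $aaa$ in $F_\infty$, immediate from $\sigma(a)=ab$, $\sigma(b)=a$.

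The whole argument is bookkeeping on top of Theorem 2.4, Lemma 6.1, and the well-known $\{aaa,bb\}$-avoidance of $F_\infty$, so there is no serious obstacle. The only minor care is in (3), where one must keep track of the direction of the set difference and use $\Gamma_a\setminus\Gamma_{aa}=\Gamma_{ab}$.
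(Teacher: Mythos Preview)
Your argument is correct and follows essentially the same route as the paper: reduce $\omega_p\cdots\omega_{p+i}\prec F_\infty$ to the vanishing of the relevant gaps, read off from Theorem~2.4 that the only empty gaps occur as $\nu_{\omega,2}$ in T1.2 and $\nu_{\omega,1}$ in T1.3, apply Lemma~6.1 to convert this to membership in $\Gamma_a,\Gamma_b$, and finish with the $\{bb,aaa\}$-avoidance of $F_\infty$. Your remark about the direction of the set difference in (3) is apt: as written the statement should be $\Lambda(\mathcal P_1)\setminus\Lambda(\mathcal P_2)$, and you compute exactly that.
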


\begin{proof}

(1) The spectrum of ${\cal P}_1$ contains all $\omega$ and $p$ such that $\omega_p\in\mathcal{P}_1$, i.e., $\omega_p\omega_{p+1}\prec F_\infty$, which is equal to $\nu_{\omega,p}=\varepsilon$.
By Theorem 2.2, there are two cases:

Case 1: $\nu_{\omega,1}=\varepsilon$ and $\nu_{\omega,p}=\nu_{\omega,1}$.
By Theorem 2.4, $\nu_{\omega,1}=\varepsilon\Leftrightarrow\omega\in$T1.3.
By Lemma 6.1, $\nu_{\omega,p}=\nu_{\omega,1}\Leftrightarrow p\in\Gamma_a$.

Case 2: $\nu_{\omega,2}=\varepsilon$ and $\nu_{\omega,p}=\nu_{\omega,1}$.
By Theorem 2.4, $\nu_{\omega,2}=\varepsilon\Leftrightarrow\omega\in$T1.2.
By Lemma 6.1, $\nu_{\omega,p}=\nu_{\omega,2}\Leftrightarrow p\in\Gamma_b$.

(2) The spectrum of ${\cal P}_2$ contains all $\omega$ and $p$ such that $\omega_p\omega_{p+1}\omega_{p+2}\prec F_\infty$. By Theorem 2.2 and Theorem 2.4, it is equivalent to $\nu_{\omega,p}=\nu_{\omega,p+1}=\varepsilon$, i.e., $\nu_{\omega,1}=\varepsilon$ and $aa\prec F_\infty$ or $\nu_{\omega,2}=\varepsilon$ and $bb\prec F_\infty$.
Since $aa\prec F_\infty$ and $bb\not\prec F_\infty$, the spectrum of ${\cal P}_2$ contains $\omega_p$ with $\omega\in$T1.3, $F_\infty[p]=a$ and $F_\infty[p+1]=a$, i.e., $(\textrm{T}1.3,\Gamma_{aa})$.

(3) Since (1) and (2), by the minus of sets, $\Lambda({\cal P}_2)\setminus\Lambda({\cal P}_1)=(\textrm{T}1.2,\Gamma_b)\sqcup(\textrm{T}1.3,\Gamma_{ab})$.

(4) The spectrum of ${\cal P}_3$ contains all $\omega$ and $p$ such that
$\omega_p\omega_{p+1}\omega_{p+2}\omega_{p+3}\prec F_\infty$. By Theorem 2.2 and Theorem 2.4, it is equivalent to $\nu_{\omega,p}=\nu_{\omega,p+1}=\nu_{\omega,p+2}=\varepsilon$, i.e., $\nu_{\omega,1}=\varepsilon$ and $aaa\prec F_\infty$ or $\nu_{\omega,2}=\varepsilon$ and $bbb\prec F_\infty$. Since $aaa\not\prec F_\infty$ and $bbb\not\prec F_\infty$, both of them are obviously not true. So there is no 4-square word in $F_\infty$, i.e.$\Lambda({\cal P}_3)=\emptyset$. Similarly, $\Lambda({\cal P}_i)=\emptyset$, $i\geq3$.
\end{proof}

We have shown that the Local question is equivalent to determine the projection
of the spectrum $\Lambda({\cal P})$ on factor space. Thus we get the Local property for power property immediately from Proposition 6.3, where only Corollary 6.3(3) is proved by the minus of sets.

\begin{corollary} [Local property for Power Property]\

(1) $\Lambda({\cal P}_1)=\emph{T}1.2\sqcup\emph{T}1.3$;

(2) $\Lambda({\cal P}_2)=\emph{T}1.3$;

(3) $\Lambda({\cal P}_2)\setminus\Lambda({\cal P}_1)=\emph{T}1.2$;

(4) $\Lambda({\cal P}_i)=\emptyset$, $i\geq3$.
\end{corollary}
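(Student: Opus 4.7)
The plan is to derive the corollary directly from Proposition 6.3 by projecting the spectrum onto the factor space, which is the content of the ``Local question'' as discussed just before the corollary. Concretely, for a property $\mathcal{P}$, the projection $\pi(\Lambda(\mathcal{P}))=\{\omega\prec F_\infty:\exists p\in\mathbb{N},\;\omega_p\in\mathcal{P}\}$ gives exactly the local (factor-only) version of $\mathcal{P}$. So the bulk of the work is already done in Proposition 6.3; only a small projection argument remains.

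For parts (1), (2) and (4), I would simply project the corresponding spectra: from $\Lambda(\mathcal{P}_1)=(\textrm{T}1.2,\Gamma_b)\sqcup(\textrm{T}1.3,\Gamma_a)$ we obtain $\pi(\Lambda(\mathcal{P}_1))=\textrm{T}1.2\sqcup\textrm{T}1.3$, using that both $\Gamma_a$ and $\Gamma_b$ are non-empty subsets of $\mathbb{N}$ (the Fibonacci sequence contains both letters infinitely often); from $\Lambda(\mathcal{P}_2)=(\textrm{T}1.3,\Gamma_{aa})$ we similarly obtain $\pi(\Lambda(\mathcal{P}_2))=\textrm{T}1.3$, since $aa\prec F_\infty$ forces $\Gamma_{aa}\neq\emptyset$; and for $i\geq 3$ the empty spectrum projects to the empty set.

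For part (3), the key point is that here ``$\Lambda(\mathcal{P}_2)\setminus\Lambda(\mathcal{P}_1)$'' must be interpreted at the factor level as the set difference of the projections, namely
\begin{equation*}
\pi(\Lambda(\mathcal{P}_2))\setminus\pi(\Lambda(\mathcal{P}_1))=\textrm{T}1.3\setminus(\textrm{T}1.2\sqcup\textrm{T}1.3)=\varnothing,
\end{equation*}
or, going the other way (as the parallel with Proposition 6.3(3) suggests), $\pi(\Lambda(\mathcal{P}_1))\setminus\pi(\Lambda(\mathcal{P}_2))=(\textrm{T}1.2\sqcup\textrm{T}1.3)\setminus\textrm{T}1.3=\textrm{T}1.2$, which matches the stated $\textrm{T}1.2$. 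This is the only part that is not a pure projection: one must first project $\Lambda(\mathcal{P}_1)$ and $\Lambda(\mathcal{P}_2)$ separately via (1) and (2), and then subtract at the level of factor sets, because projection does not commute with set difference.

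The main potential obstacle is precisely this subtlety in (3): the set difference $\Lambda(\mathcal{P}_1)\setminus\Lambda(\mathcal{P}_2)$ at the spectrum level equals $(\textrm{T}1.2,\Gamma_b)\sqcup(\textrm{T}1.3,\Gamma_{ab})$ and projects to $\textrm{T}1.2\sqcup\textrm{T}1.3$, not to $\textrm{T}1.2$. So one has to be explicit that the corollary takes the difference \emph{after} projecting, i.e.\ ``a factor $\omega$ has the property of admitting a square but no cube''. Once this is stated carefully, (3) follows immediately from (1) and (2) and the inclusion $\pi(\Lambda(\mathcal{P}_2))\subseteq\pi(\Lambda(\mathcal{P}_1))$ together with the disjointness of the types T1.2 and T1.3 proved in Lemma 4.1.
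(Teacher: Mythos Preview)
Your approach is essentially the paper's own: the paper states that Corollary 6.4 follows immediately from Proposition 6.3 by projecting the spectrum onto the factor space, with only part (3) requiring the ``minus of sets'' argument you describe. Your treatment is in fact more careful than the paper's, since you correctly flag that the labelling $\Lambda(\mathcal{P}_2)\setminus\Lambda(\mathcal{P}_1)$ must be read as $\Lambda(\mathcal{P}_1)\setminus\Lambda(\mathcal{P}_2)$ (a typo inherited from Proposition 6.3(3)) and that the difference must be taken after projection.
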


\vspace{0.2cm}

\noindent\textbf{Remark.} Corollary 6.4(1) means "$\omega^2\prec F_\infty\Leftrightarrow\omega\in$T1.2$\sqcup$T1.3", i.e., "$\omega^2\prec F_\infty\Leftrightarrow\omega$ is a conjugation of $F_k$", which is equivalent to Theorem 3(1) to 3(3) in Wen and Wen\cite{WW1994}. Similarly, Corollary 6.4(2) is equivalent to Theorem 3(4), Corollary 6.4(3) is equivalent to Theorem 3(5), Corollary 6.4(4) is equivalent to Theorem 3(6) in Wen and Wen\cite{WW1994}.

\begin{definition}[Separated Properties]
Let $\omega\in F_\infty$. We say that $\omega\in{\cal S}_i~(i=1,2,\cdots)$ if there exists $p$ and nonempty factors $u_1,\cdots, u_{i-1}$ such that
$$\omega_p u_1\omega_{p+1}u_2\cdots u_{i-1}\omega_{p+i}\in F_{\infty}.$$
If all $i\in\mathbb N$, $\omega\in {\cal S}_i$, we say that $\omega\in{\cal S}_\infty.$

\end{definition}

\noindent\textbf{Remark.} By definition of $\nu_{\omega,p}$, $\omega\in{\cal S}_i$ is equivalent to there exists $p$ such that $|\nu_{\omega,p}|$, $|\nu_{\omega,p+1}|$, $\cdots,$ $|\nu_{\omega,p+i-1}|$ are strictly positive.

\begin{proposition}[Global property for separated Property]\

(1) $\Lambda(\mathcal{S}_1)=(\emph{T}1.1\sqcup\emph{T}2.1,\mathbb{N})\sqcup
(\emph{T}1.2\sqcup\emph{T}2.2,\Gamma_a)$;

(2) $\Lambda(\mathcal{S}_2)=(\emph{T}1.1\sqcup\emph{T}2.1,\mathbb{N})\sqcup(\emph{T}1.2\sqcup
\emph{T}2.2,\Gamma_{aa})$;

(3) $\Lambda(\mathcal{S}_3)=(\emph{T}1.1\sqcup\emph{T}2.1,\mathbb{N})$;

(4) $\Lambda(\mathcal{S}_\infty)=\emph{T}1.1\sqcup\emph{T}2.1$.

\end{proposition}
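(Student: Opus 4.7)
The plan is to convert everything into a question about the signs of $|\nu_{\omega,1}|$ and $|\nu_{\omega,2}|$ and then combine this with Lemma 6.1 to read off the spectrum. First, by the Remark following Definition 6.5, $\omega\in\mathcal{S}_i$ is equivalent to the existence of an index $p$ such that $|\nu_{\omega,p+j}|>0$ for every $0\le j\le i-1$. Since Theorem 2.2 tells us that $\{\nu_{\omega,p}\}_{p\ge 1}$ is a Fibonacci sequence over the alphabet $\{\nu_{\omega,1},\nu_{\omega,2}\}$, the length $|\nu_{\omega,p}|$ depends only on whether $p\in\Gamma_a$ or $p\in\Gamma_b$, via Lemma 6.1. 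This reduces the global question to a purely combinatorial question about letter-patterns in $F_\infty$.

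Next, I would read off Theorem 2.4 the sign pattern of $(|\nu_{\omega,1}|,|\nu_{\omega,2}|)$ type by type: both strictly positive for $\omega\in\textrm{T}1.1\sqcup\textrm{T}2.1$; exactly $|\nu_{\omega,1}|>0$ (with $\nu_{\omega,2}$ empty or of negative length) for $\omega\in\textrm{T}1.2\sqcup\textrm{T}2.2$; and neither positive for $\omega\in\textrm{T}1.3\sqcup\textrm{T}2.3$. Combining with Lemma 6.1, the condition ``$|\nu_{\omega,p+j}|>0$ for $0\le j\le i-1$'' becomes: automatically satisfied by every $p\in\mathbb{N}$ in the first case; equivalent to $F_\infty[p,p+i-1]=a^i$ (that is, $p\in\Gamma_{a^i}$) in the second case; and never satisfied in the third case.

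Substituting $i=1,2$ directly gives (1) and (2), since $\Gamma_{a^1}=\Gamma_a$ and $\Gamma_{a^2}=\Gamma_{aa}$. For $i=3$, the classical fact that $aaa\not\prec F_\infty$ (immediate from $\sigma(a)=ab$, $\sigma(b)=a$, since every factor $aa$ of $F_\infty$ arises as the image of a letter $b$, which is isolated) forces $\Gamma_{a^3}=\emptyset$, so the contribution of $\textrm{T}1.2\sqcup\textrm{T}2.2$ disappears and only $(\textrm{T}1.1\sqcup\textrm{T}2.1,\mathbb{N})$ survives, yielding (3). For (4), if $\omega\in\mathcal{S}_\infty$ then $\omega\in\mathcal{S}_i$ for all $i$; by the above dichotomy either $\omega\in\textrm{T}1.1\sqcup\textrm{T}2.1$ (and then every $p$ works), or $\omega\in\textrm{T}1.2\sqcup\textrm{T}2.2$ (which would require arbitrarily long runs of $a$'s in $F_\infty$, impossible by the same avoidance of $aaa$). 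Projecting onto the factor coordinate gives $\Lambda(\mathcal{S}_\infty)=\textrm{T}1.1\sqcup\textrm{T}2.1$.

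The main obstacle is purely bookkeeping: one must correctly identify, for each of the six types, which of the two candidate gaps has positive length and which is empty or ``negative'', and then faithfully translate the requirement on consecutive gaps into the appropriate $\Gamma$-subscript. Beyond Theorem 2.4, Lemma 6.1, and the standard avoidance $aaa\not\prec F_\infty$, no further ingredient is required.
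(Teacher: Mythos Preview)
Your proposal is correct and follows essentially the same approach as the paper: reduce the question to the sign pattern of $(|\nu_{\omega,1}|,|\nu_{\omega,2}|)$ via Theorem~2.4, then translate the condition on consecutive gaps into a letter-pattern constraint on $F_\infty$ via Lemma~6.1 and Theorem~2.2, invoking $aaa\not\prec F_\infty$ for parts (3) and (4). Your presentation is in fact slightly more streamlined, handling all $i$ uniformly before specializing, but the ingredients and logic are identical to the paper's.
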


\begin{proof} (1) $\omega_p\in\mathcal{S}_1$ means $|\nu_{\omega,p}|>0$. By Theorem 2.2 and Theorem 2.4, there are two cases:

Case 1: Both $|\nu_{\omega,1}|$ and $|\nu_{\omega,2}|$ are strictly positive. Then $\omega\in$T1.1$\sqcup$T2.1.

Case 2: $|\nu_{\omega,1}|>0$, $|\nu_{\omega,2}|\leq0$ and $\nu_{\omega,p}=\nu_{\omega,1}$. Then $\omega\in$T1.2$\sqcup$T2.2. By Lemma 6.1, $\nu_{\omega,p}=\nu_{\omega,1}\Leftrightarrow F_\infty[p]=a$, i.e., $p\in\Gamma_a$.

(2) $\omega_p\in\mathcal{S}_2$ means both $|\nu_{\omega,p}|$ and $|\nu_{\omega,p+1}|$ are strictly positive. By Theorem 2.2 and Theorem 2.4, there are two cases:

Case 1: Both $|\nu_{\omega,1}|$ and $|\nu_{\omega,2}|$ are strictly positive. Then $\omega\in$T1.1$\sqcup$T2.1.

Case 2: $|\nu_{\omega,1}|>0$, $|\nu_{\omega,2}|\leq0$ and $\nu_{\omega,p}=\nu_{\omega,p+1}=\nu_{\omega,1}$. Then $\omega\in$T1.2$\sqcup$T2.2. By Lemma 6.1, $\nu_{\omega,p}=\nu_{\omega,p+1}=\nu_{\omega,1}\Leftrightarrow F_\infty[p]=F_\infty[p+1]=a$, i.e., $p\in\Gamma_{aa}$.

(3) $\omega_p\in\mathcal{S}_3$ means $|\nu_{\omega,p}|$, $|\nu_{\omega,p+1}|$ and $|\nu_{\omega,p+2}|$ are all strictly positive. Since $aaa,~bbb\not\prec F_\infty$, both $\nu_{\omega,1}$ and $\nu_{\omega,2}$ can not appear three times continually.
So $\omega_p\in\mathcal{S}_3$ contains both $|\nu_{\omega,1}|$ and $|\nu_{\omega,2}|$ are strictly positive. By Theorem 2.4, $\omega\in$T1.1$\sqcup$T2.1.

(4) When $\omega\in$T1.1$\sqcup$T2.1., both $|\nu_{\omega,1}|$ and $|\nu_{\omega,2}|$ are strictly positive, thus $\forall p\in\mathbb{N}$, $|\nu_{\omega,p}|>0$, i.e., $\omega\in\mathcal{S}_\infty$.
\end{proof}

\begin{corollary} [Local property for separated properties]\

(1) $\Lambda(\mathcal{S}_1)=\emph{T}1.1\sqcup\emph{T}1.2\sqcup\emph{T}2.1\sqcup\emph{T}2.2$;

(2) $\Lambda(\mathcal{S}_2)=\Lambda(\mathcal{S}_1)$;

(3) $\Lambda(\mathcal{S}_3)=\emph{T}1.1\sqcup\emph{T}2.1$;

(4) $\Lambda(\mathcal{S}_\infty)=\Lambda(\mathcal{S}_3)$.
\end{corollary}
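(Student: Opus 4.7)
The plan is to deduce Corollary 6.7 as an immediate consequence of Proposition 6.6 by projecting the global spectrum onto the factor coordinate. Recall that a factor $\omega$ belongs to the local spectrum $\Lambda(\mathcal{S}_i)$ (in the sense of the Local Question) precisely when some pair $(\omega,p)$ lies in the global spectrum, so for a product piece of the form $(A,B)$ the projection onto the first coordinate is simply $A$, provided $B\neq\emptyset$. Thus the whole task reduces to reading off each piece of Proposition 6.6 and checking that its fiber index set is non-empty.

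For part (1), Proposition 6.6(1) gives $\Lambda(\mathcal{S}_1)=(\emph{T1.1}\sqcup\emph{T2.1},\mathbb{N})\sqcup(\emph{T1.2}\sqcup\emph{T2.2},\Gamma_a)$. Both $\mathbb{N}$ and $\Gamma_a$ are non-empty (indeed $F_\infty[1]=a$, so $1\in\Gamma_a$), so projecting onto $\omega$ yields the union of all four types. For part (2), I would apply the same argument to Proposition 6.6(2); the only new ingredient is that $\Gamma_{aa}\neq\emptyset$, which is clear since $F_\infty$ begins $abaab\cdots$ and thus $3\in\Gamma_{aa}$. Consequently the projection of $(\emph{T1.2}\sqcup\emph{T2.2},\Gamma_{aa})$ is still $\emph{T1.2}\sqcup\emph{T2.2}$, and combined with the first piece we recover exactly $\Lambda(\mathcal{S}_1)$, proving $\Lambda(\mathcal{S}_2)=\Lambda(\mathcal{S}_1)$ at the factor level.

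For part (3), Proposition 6.6(3) consists of the single piece $(\emph{T1.1}\sqcup\emph{T2.1},\mathbb{N})$, so its projection is immediately $\emph{T1.1}\sqcup\emph{T2.1}$. For part (4), Proposition 6.6(4) is already written in projected form: when both $|\nu_{\omega,1}|>0$ and $|\nu_{\omega,2}|>0$, Lemma 6.1 and Theorem 2.2 force $|\nu_{\omega,p}|>0$ for every $p\in\mathbb{N}$, so the global fiber above such an $\omega$ is all of $\mathbb{N}$ and the projection agrees with part (3).

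There is essentially no obstacle here: the only substantive content is Proposition 6.6 itself, and the verification reduces to the triviality that $\mathbb{N}$, $\Gamma_a$, and $\Gamma_{aa}$ are all non-empty, which is visible from the first few letters of $F_\infty$. Hence Corollary 6.7 follows in a single step from the global spectrum computation.
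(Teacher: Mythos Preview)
Your proposal is correct and matches the paper's approach exactly: the paper states this corollary without a separate proof, having already noted (before Corollary 6.4) that the Local question is equivalent to projecting the global spectrum $\Lambda(\mathcal{P})$ onto the factor coordinate, so Corollary 6.7 is obtained immediately from Proposition 6.6 by projection. Your added verification that $\mathbb{N}$, $\Gamma_a$, and $\Gamma_{aa}$ are non-empty is the only (trivial) point the paper leaves implicit.
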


\noindent\textbf{Remark.} Corollary 6.7(4) determine the factor with separated property completely, which we only know T1.1 (singular word) before, see\cite{WW1994}.

\begin{definition}[Overlapped Property]
Let $\omega\in F_\infty$. We say that $\omega\in{\cal O}_i~(i=1,2,\cdots)$ if there exists $p$ and nonempty factors $u_1,\cdots, u_{i-1}$ such that:
$$\omega_p u_1^{-1}\omega_{p+1}u_2^{-1}\cdots u_{i-1}^{-1}\omega_{p+i}\in F_{\infty}.$$
If all $i\in\mathbb N$, $\omega\in {\cal O}_i$, we say that $\omega\in{\cal O}_\infty.$

\end{definition}

\noindent\textbf{Remark.} By definition of $\nu_{\omega,p}$, $\omega\in{\cal O}_i$ is equivalent to there exists $p$ such that $|\nu_{\omega,p}|$, $|\nu_{\omega,p+1}|$, $\cdots,$ $|\nu_{\omega,p+i-1}|$ are strictly negative.

\begin{proposition}[Global property for overlapped Property]\

(1) $\Lambda(\mathcal{O}_1)=(\emph{T}1.3\sqcup\emph{T}2.2,\Gamma_b)\sqcup(\emph{T}2.3,\mathbb{N})$;

(2) $\Lambda(\mathcal{O}_2)=(\emph{T}2.3,\mathbb{N})$;

(3) $\Lambda(\mathcal{O}_\infty)=\emph{T}2.3$.

\end{proposition}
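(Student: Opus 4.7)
The plan is to proceed in parallel with the proofs of Propositions 6.3 and 6.6, leaning entirely on two ingredients already at hand: the sign table of $|\nu_{\omega,1}|$ and $|\nu_{\omega,2}|$ supplied by Theorem 2.4 (read off for each of the six types T1.1--T2.3), and Lemma 6.1, which translates the equality $\nu_{\omega,p}=\nu_{\omega,1}$ (resp.\ $\nu_{\omega,p}=\nu_{\omega,2}$) into $p\in\Gamma_a$ (resp.\ $p\in\Gamma_b$). By the remark after Definition 6.8, $\omega_p\in\mathcal{O}_i$ is equivalent to requiring $|\nu_{\omega,p}|,|\nu_{\omega,p+1}|,\ldots,|\nu_{\omega,p+i-1}|$ to all be strictly negative.

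For part (1), I would scan the six types and retain only those in which at least one of $|\nu_{\omega,1}|,|\nu_{\omega,2}|$ is strictly negative. Types T1.1 and T2.1 contribute nothing (both positive), and T1.2 contributes nothing either (one positive, one zero). In T1.3 we have $|\nu_{\omega,1}|=0$ and $|\nu_{\omega,2}|=-f_{k-2}<0$, and in T2.2 we have $|\nu_{\omega,1}|>0$ and $|\nu_{\omega,2}|=f_k-n<0$; in both cases the selector $\nu_{\omega,p}=\nu_{\omega,2}$ is forced, which by Lemma 6.1 is $p\in\Gamma_b$. In T2.3 both lengths are already negative so every $p\in\mathbb{N}$ qualifies. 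Assembling the pieces, and invoking Lemma 4.1 to guarantee that the three contributions are genuinely disjoint, yields $\Lambda(\mathcal{O}_1)=(\text{T}1.3\sqcup\text{T}2.2,\Gamma_b)\sqcup(\text{T}2.3,\mathbb{N})$.

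For part (2), the observation driving everything is that for $\omega\in\text{T}1.3$ or $\omega\in\text{T}2.2$ only the second gap has negative length, so $\omega_p\in\mathcal{O}_2$ would require $\nu_{\omega,p}=\nu_{\omega,p+1}=\nu_{\omega,2}$, that is, $F_\infty[p]=F_\infty[p+1]=b$; but $bb\not\prec F_\infty$, so T1.3 and T2.2 drop out. Only T2.3 survives, and there both lengths are negative, so every $p$ works, giving $\Lambda(\mathcal{O}_2)=(\text{T}2.3,\mathbb{N})$. Part (3) then follows by induction on $i$: the same $bb\not\prec F_\infty$ obstruction keeps T1.3 and T2.2 out of $\mathcal{O}_i$ for all $i\ge 2$, while for every $\omega\in\text{T}2.3$ and every $p\in\mathbb{N}$ we have $|\nu_{\omega,p}|<0$, so $\omega\in\mathcal{O}_i$ for all $i$; projecting onto the factor component gives $\Lambda(\mathcal{O}_\infty)=\text{T}2.3$.

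There is no substantive obstacle once Theorem 2.4 and Lemma 6.1 are in place: the argument is a routine case analysis organised by the six types. The only point requiring care is the bookkeeping of which of $|\nu_{\omega,1}|,|\nu_{\omega,2}|$ is negative in each type, and the consistent use of $bb\not\prec F_\infty$ to exclude consecutive occurrences of the short gap in T1.3 and T2.2.
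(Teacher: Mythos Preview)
Your proposal is correct and follows essentially the same approach as the paper: a case analysis over the six types using the sign information from Theorem 2.4, Lemma 6.1 to translate $\nu_{\omega,p}=\nu_{\omega,2}$ into $p\in\Gamma_b$, and the fact $bb\not\prec F_\infty$ to eliminate T1.3 and T2.2 from $\mathcal{O}_2$ onward. Your version is slightly more explicit in scanning all six types and in citing Lemma 4.1 for disjointness, but the argument is the same.
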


\begin{proof} (1) $\omega_p\in\mathcal{O}_1$ means $|\nu_{\omega,p}|<0$.
By Theorem 2.2 and Theorem 2.4, there are two cases:

Case 1: Both $|\nu_{\omega,1}|$ and $|\nu_{\omega,2}|$ are strictly negative. Then $\omega\in$T2.3.

Case 2: $|\nu_{\omega,1}|\geq0$, $|\nu_{\omega,2}|<0$ and $\nu_{\omega,p}=\nu_{\omega,2}$.
Then $\omega\in$T1.3$\sqcup$T2.2. By Lemma 6.1, $\nu_{\omega,p}=\nu_{\omega,2}\Leftrightarrow F_\infty[p]=b$, i.e., $p\in\Gamma_b$.

(2) $\omega_p\in\mathcal{O}_2$ means both $|\nu_{\omega,p}|$ and $|\nu_{\omega,p+1}|$ are strictly negative. Since $bb\not\in F_\infty$, the word $\omega_p\in(\textrm{T}1.3\sqcup\textrm{T}2.2,\Gamma_b)$ doesn't possess $\mathcal{O}_2$.
When $\omega\in$T2.3, both $|\nu_{\omega,1}|$ and $|\nu_{\omega,2}|$ are strictly negative. So for $\forall p\in\mathbb{N}$, $\omega_p\in$T2.3 possesses $\mathcal{O}_2$.

(3) By Theorem 2.4, when $\omega\in$T2.3, both $|\nu_{\omega,1}|$ and $|\nu_{\omega,2}|$ are strictly negative, thus $\forall~p$, $|\nu_{\omega,p}|<0$, i.e., $\omega\in\mathcal{O}_\infty$.
\end{proof}

\begin{corollary}[Local property for overlapped property]\

(1) $\Lambda(\mathcal{O}_1)=\emph{T}1.3\sqcup\emph{T}2.2\sqcup\emph{T}2.3$;

(2) $\Lambda(\mathcal{O}_2)=\emph{T}2.3$;

(3) $\Lambda(\mathcal{O}_\infty)=\Lambda(\mathcal{O}_2)$.
\end{corollary}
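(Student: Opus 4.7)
The plan is to derive Corollary 6.10 from Proposition 6.9 by projecting each spectrum onto the factor space, exactly as Corollary 6.4 was obtained from Proposition 6.3 and Corollary 6.7 from Proposition 6.6. The paper already observed that the Local question is the projection of the Global spectrum onto the factor coordinate, so the work reduces to checking which first coordinates survive the projection.

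For part (1), beginning from
$$\Lambda(\mathcal{O}_1) = (\text{T1.3} \sqcup \text{T2.2}, \Gamma_b) \sqcup (\text{T2.3}, \mathbb{N}),$$
a factor $\omega$ lies in the projection if and only if there is an index $p$ witnessing $(\omega,p) \in \Lambda(\mathcal{O}_1)$. Since $\Gamma_b$ is nonempty (e.g., $F_\infty[2]=b$), every $\omega \in \text{T1.3} \sqcup \text{T2.2}$ has such a $p$, and every $\omega \in \text{T2.3}$ trivially has one because $\mathbb{N}\neq\emptyset$. By Lemma 4.1 the three types are pairwise disjoint, so the projection equals T1.3 $\sqcup$ T2.2 $\sqcup$ T2.3, proving (1).

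For part (2), the spectrum $(\text{T2.3}, \mathbb{N})$ projects to T2.3 since $\mathbb{N}$ is nonempty. For part (3), the set $\Lambda(\mathcal{O}_\infty)=\text{T2.3}$ from Proposition 6.9 already lives in the factor space, so the projection returns the same set; combining with (2) yields $\Lambda(\mathcal{O}_\infty)=\text{T2.3}=\Lambda(\mathcal{O}_2)$.

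The only point that requires any care is verifying that no factor-set appearing in Proposition 6.9 is paired with an empty index set, which would make it disappear under projection. This is immediate: $\mathbb{N}$ is obviously nonempty, and $\Gamma_b$ is nonempty because the letter $b$ occurs in $F_\infty$. Consequently the proof is essentially notational once Proposition 6.9 is in hand, and there is no substantive obstacle.
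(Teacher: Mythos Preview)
Your proof is correct and follows precisely the approach the paper intends: Corollary 6.10 is stated without proof because it is obtained from Proposition 6.9 by projecting the spectrum onto the factor coordinate, exactly as you describe and exactly as Corollaries 6.4 and 6.7 were obtained from Propositions 6.3 and 6.6.
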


\noindent\textbf{Remark.} Corollary 6.10 contains Theorem 6 in Wen and Wen\cite{WW1994}.
Moreover, we correct a small mistake (Lemma 7) there: If $\omega\in \mathcal{O}_1$, then the overlap of $\omega$ is unique.
In fact, when $\omega\in$T2.3, $\nu_{\omega,1}^{-1}=s_{k-1}[f_{k+1}-n-i,f_{k-1}-i-1]$, $|\nu_{\omega,1}|=f_k-n<0$ and $\nu_{\omega,2}^{-1}=F_k[f_{k+1}-n-i-1,f_k-i-2]$, $|\nu_{\omega,2}|=f_{k-1}-n<0$, which means the overlap of $\omega$ is not unique.
For instance, let $\omega=baababaab\in$T2.3, both $baababaa(b)aababaab$ and $baaba(baab)abaab$ are factors of $F_\infty$.


\vspace{0.5cm}

\noindent\textbf{\large{Acknowledgments}}

\vspace{0.4cm}

The research is supported by the Grant NSF No.61071066, No.11271223 and No.11371210.


\vspace{0.4cm}

\end{CJK*}
\end{document}